\newtheorem{lemma}{Lemma}[section]
\newtheorem{remark}{Remark}[section]
\newcommand{\dnorm}{\@ifstar\@dnorms\@dnorm}
\newcommand{\@dnorms}[1]{  \left|\mkern-1.5mu\left|
  #1
  \right|\mkern-1.5mu\right|
}
\newcommand{\@dnorm}[2][]{  \mathopen{#1|\mkern-1.5mu#1|}
  #2
  \mathclose{#1|\mkern-1.5mu#1|}
}
\DeclareMathOperator{\atantwo}{atan2}
\title{AIR algebraic multigrid for a space-time hybridizable
  discontinuous Galerkin discretization of
  advection(-diffusion)\thanks{SR gratefully acknowledges
      support from the Natural Sciences and Engineering Research
      Council of Canada through the Discovery Grant program
      (RGPIN-05606-2015). BSS was supported by Lawrence Livermore
      National Laboratory under contracts B639443 and B634212, and as
      a Nicholas C. Metropolis Fellow under the Laboratory Directed
      Research and Development program of Los Alamos National
      Laboratory.  }}
\author{A. A. Sivas\thanks{Department of Applied Mathematics,
    University of Waterloo, Canada (\url{aasivas@uwaterloo.ca}),
    \url{http://orcid.org/0000-0002-5263-1889}} \and
  B. S. Southworth\thanks{Los Alamos National Laboratory, Los Alamos NM,
    U.S.A. (\url{southworth@lanl.gov}),
    \url{http://orcid.org/0000-0002-0283-4928}} \and
  S. Rhebergen\thanks{Department of Applied Mathematics, University of
    Waterloo, Canada (\url{srheberg@uwaterloo.ca}),
    \url{http://orcid.org/0000-0001-6036-0356}}}
\begin{document}
\maketitle
\begin{abstract}
  This paper investigates the efficiency, robustness, and scalability
  of approximate ideal restriction (AIR) algebraic multigrid as a
  preconditioner in the \textit{all-at-once} solution of a space-time
  hybridizable discontinuous Galerkin (HDG) discretization of
  advection-dominated flows. The motivation for this study is that the
  time-dependent advection-diffusion equation can be seen as a
  ``steady'' advection-diffusion problem in $(d+1)$-dimensions and AIR
  has been shown to be a robust solver for steady advection-dominated
  problems. Numerical examples demonstrate the effectiveness of AIR as
  a preconditioner for advection-diffusion problems on fixed and
  time-dependent domains, using both slab-by-slab and all-at-once
  space-time discretizations, and in the context of uniform and
  space-time adaptive mesh refinement. A closer look at the geometric
  coarsening structure that arises in AIR also explains why AIR can
  provide robust, scalable space-time convergence on advective and
  hyperbolic problems, while most multilevel parallel-in-time schemes
  struggle with such problems.
\end{abstract}
\section{Introduction}
\label{sec:introduction}
In this paper, we are interested in the fast parallel solution of the
time-dependent advection(-diffusion) problem on a time-dependent
domain $\Omega(t)$,
\begin{equation}
  \label{eq:advdiftimedepdomain}
  \partial_t u + a \cdot \nabla u - \nu \nabla^2 u = f
  \qquad \text{in } \Omega(t),\ t^0 < t < t^N,
\end{equation}
where $a$ is the advective velocity, $f$ is a source term, and $\nu\geq0$ is the
diffusion constant. We are particularly interested in the
advection-dominated regime where $0 \le \nu \ll 1$.

To discretize \cref{eq:advdiftimedepdomain}, we consider the space-time
framework in which the problem is recast into a space-time domain as
follows. Let $x=(x_1,\hdots,x_d)$ be the spatial variables in spatial
dimension $d$. A point at time $t=x_0$ with position $x$ then
has Cartesian coordinates $\hat{x}=(x_0,x)$ in space-time. Defining
the space-time domain
$\mathcal{E} := \cbr[0]{\hat{x}\ :\ x\in\Omega(x_0),\ t^0 < x_0 < t^N}$, the
space-time advective velocity $\widehat{a} := (1,a)$ and the
space-time gradient $\widehat{\nabla} := (\partial_t, \nabla)$, the
space-time formulation of \cref{eq:advdiftimedepdomain} is given by
\begin{equation}
  \label{eq:advdiftimedepdomain_st}
  \widehat{a} \cdot \widehat{\nabla} u - \nu \nabla^2 u = f \qquad \text{in } \mathcal{E}.
\end{equation}

There are multiple reasons to consider space-time finite element
methods over traditional discretizations. First, space-time
methods provide a natural framework for the discretization of partial
differential equations on time-dependent domains \cite{Hubner:2004,
  Masud:1997, Tezduyar:1992b, Tezduyar:2006, Walhorn:2005}. This is
because the domain and mesh movement are automatically accounted for by
the space-time finite element spaces, which are defined on a
triangulation of the space-time domain $\mathcal{E}$. Furthermore,
since there is no distinction between spatial and temporal variables,
it is relatively straightforward to allow local time stepping and
adaptive space-time mesh refinement (see, for example
\cite{Ven:2008}). This is particularly interesting from an efficiency
perspective for problems that require locally small time steps and fine
mesh resolution to achieve high levels of accuracy in only some parts of
the domain. These properties are non-trivial within the
context of traditional time-integration techniques. Finally,
space-time finite elements allow for greater parallelization by
solving for the entire space-time solution simultaneously, rather
than in a sequential time-stepping process. This ends up being
particularly relevant for hyperbolic PDEs, as will be discussed
later.

Space-time discontinuous Galerkin (DG) finite element methods are well
suited for solving \cref{eq:advdiftimedepdomain_st} in the
advection-dominated limit (see \cite{Rhebergen:2013b, Sollie:2011,
  Tavelli:2015, Tavelli:2016, Sudirham:2008, Vegt:2002, Wang:2015} and
references therein). This is because space-time DG methods incorporate
upwinding in their numerical fluxes, are locally conservative, and
automatically satisfy the geometric conservation law (GCL)
\cite{Lesoinne:1996}, which requires that the uniform flow remains uniform
under grid motion. We point out that alternative discretizations (such
as arbitrary Lagrangian--Eulerian methods) may require additional
constraints to satisfy the GCL \cite{Persson:2009}.  One downside of
space-time DG methods is the large number of globally coupled
degrees-of-freedom (DOFs) that arise when applying DG finite elements
in $(d+1)$-dimensional space. However, the space-time hybridizable
discontinuous Galerkin (HDG) method \cite{Rhebergen:2012,
  Rhebergen:2013}, introduced as a space-time extension of the HDG
method \cite{Cockburn:2009}, can attenuate this problem. The
space-time HDG method, like the HDG method, introduces approximate
traces of the solution on the element faces. The DOFs on the interior of
an element are then eliminated from the system, resulting in a
(significantly smaller) global system of algebraic equations only for
the approximate traces. However, it should be noted that a reduction
in the number of globally coupled DOFs does not necessarily imply a
more efficient time to solution -- the linear system still needs to be
solved.

In practice, a \emph{slab-by-slab} approach is almost exclusively used
to obtain the solution of space-time discretizations, which is
analogous to traditional time-integration techniques: the space-time
domain is partitioned into space-time slabs and local systems are
solved sequentially one time step after the other
(e.g. \cite{Jamet:1978, Masud:1997, Vegt:2002}). Although commonly
used, such an approach is limited to spatial parallelism, which
eventually plateaus in the sense that using more processors does not
speed up the time to the solution (see, e.g., \cite{Falgout:2014}). With
an increasing number of processors available for use and stagnating
core clock speeds, there has been significant research on
parallel-in-time (PinT) methods in recent years.

Some of the most effective PinT methods are multigrid-in-time methods,
where a parallel multilevel method is applied over the time domain,
which is then coupled with traditional spatial solves to perform time
steps of varying sizes (in particular, see Parareal \cite{Lions:2001}
and multigrid-reduction-in-time \cite{Falgout:2014}). Such methods are
effective on parabolic-type problems, but tend to not be robust or
just not convergent on advection-dominated and hyperbolic problems
without special treatment (for example, see
\cite{ruprecht2018wave,friedhoff2020optimal, de2020convergence, dai2013stable,
  de2019optimizing}). The simplest explanation for the difficulties
such methods have with hyperbolic problems is the separation of space
and time. By treating space and time separately, the multilevel
coarsening cannot respect the underlying characteristics that
propagate in space-time.

A more general approach is to consider space-time multigrid, that is,
multigrid methods applied to the full space-time domain. To our
knowledge, such an approach has only been applied to parabolic
problems, primarily the heat equation \cite{Weinzierl:2012,
  horton1995space, gander2016analysis}. However, even there,
space-time multigrid has demonstrated superior performance over PinT
methods that use multigrid in space and time separately
\cite{falgout2017multigrid}.  Recently, auxiliary-space
preconditioning techniques have also been proposed for space-time
finite-element discretizations \cite{Gopalakrishnan:2018}, which has
the potential to provide more general space-time solvers. Continuing with
the above discussion, the \emph{all-at-once} approach to space-time
finite elements constructs and solves a single global linear system
for the solution in the whole space-time domain. From a solver's
perspective, we claim that the all-at-once approach is particularly
well suited for advective and hyperbolic problems.

The main contribution of this paper is demonstrating the suitability
of the nonsymmetric algebraic multigrid (AMG) method based on
Approximate Ideal Restriction (AIR) \cite{Manteuffel:2019,
  Manteuffel:2018} for the solution of slab-by-slab and, in
particular, all-at-once space-time HDG discretizations of the
advection-diffusion problem in advection-dominated
regimes. Advection-dominated problems are typically difficult to solve
due to the non-symmetric nature of the problem. Nevertheless,
significant developments in multigrid methods for non-symmetric
problems have been made in recent years \cite{Notay:2012,
  Wiesner:2014, Brezina:2010dm, Sala:2008cv, Manteuffel:2017}.  In
particular, AIR has shown to be a robust solver for steady
advection-dominated problems. This motivates us to study AIR for a
space-time HDG discretization of the advection-diffusion problem,
since \cref{eq:advdiftimedepdomain_st} can be seen as a ``steady''
advection-diffusion problem in $(d+1)$-dimensions.

The remainder of this paper is organised as follows. In
\cref{sec:problem}, we present the space-time HDG discretization of the
advection-diffusion equation, and AIR
is presented in \cref{sec:preconditioning}. A discussion on why AIR
can be effective as a space-time solver of advection-dominated
problems, while most PinT methods struggle, is provided in \cref{ss:coarsen}.
Numerical results in \cref{sec:results} indeed demonstrate that
AIR is a robust and scalable solver for space-time HDG discretizations
of the advection-diffusion equation. Scalable preconditioning is
demonstrated with space-time adaptive mesh refinement (AMR) and on
time-dependent domains, and speedups over sequential time
stepping are obtained on very small processor counts.
We draw conclusions in \cref{sec:conclusions}.

\section{The space-time HDG method for the advection-diffusion equation}
\label{sec:problem}

\subsection{The advection-diffusion problem on time-dependent domains}
\label{ss:advdif}

Let $\Omega_h(t) \subset \mathbb{R}^d$, an approximation to the domain
$\Omega(t)$ in \cref{eq:advdiftimedepdomain}, be a polygonal ($d=2$)
or polyhedral ($d=3$) domain whose evolution depends continuously on
time $t \in \sbr[0]{t^0, t^N}$. We will present numerical results only
for the case $d=2$, but remark that the space-time HDG discretization
and solution procedure also hold for $d=3$. We partition the boundary
of $\Omega_h(t)$, $\partial\Omega_h(t)$, into two sets $\Gamma_D(t)$
(the Dirichlet boundary) and $\Gamma_N(t)$ (the Neumann boundary) such
that $\partial \Omega_h(t) = \Gamma_D(t) \cup \Gamma_N(t)$ and
$\Gamma_D(t) \cap \Gamma_N(t) = \emptyset$.

As discussed in \cref{sec:introduction}, a point in space-time at time
$t=x_0$ with position $x$ has Cartesian coordinates
$\hat{x}=(x_0,x)$. Throughout this paper, we will use $t$ and $x_0$
interchangeably. We introduce the $(d+1)$-dimensional computational
space-time domain
$\mathcal{E}_h := \cbr[0]{\hat{x}\, :\, x \in \Omega_h(x_0),\ t^0 <
  x_0 < t^N} \subset \mathbb{R}^{d+1}$. The boundary of
$\mathcal{E}_h$ is comprised of the hyper-surfaces
$\Omega_h(t^0) := \cbr[0]{\hat{x} \in \partial\mathcal{E}_h\, :\,
  x_0=t^0}$,
$\Omega_h(t^N) := \cbr[0]{\hat{x} \in \partial\mathcal{E}_h\, :\,
  x_0=t^N}$, and
$\mathcal{Q}_{\mathcal{E}_h} := \cbr[0]{\hat{x} \in
  \partial\mathcal{E}_h\, : \, t^0 < x_0 < t^N}$. We also introduce
the partitioning
$\partial\mathcal{E}_h = \partial\mathcal{E}_D \cup
\partial\mathcal{E}_N$ where
$\partial\mathcal{E}_D := \cbr[0]{\hat{x} \, : \, x \in
  \Gamma_D(x_0),\ t^0
  < x_0 < t^N}$ and \\
$\partial\mathcal{E}_N := \cbr[0]{\hat{x} \, : \, x \in \Gamma_N(x_0)
  \cup \Omega(t^0),\ t^0 < x_0 \le t^N}$. The outward unit space-time
normal vector to $\partial \mathcal{E}_h$ is denoted by
$\widehat{n}=(n_t, n)$, where $n_t \in \mathbb{R}$ is the temporal
part of the space-time vector and $n \in \mathbb{R}^d$ the spatial
part.

Given the viscosity $\nu \geq 0$, forcing term
$f : \mathcal{E}_h \to \mathbb{R}$, and advective velocity
$a : \mathcal{E}_h \to \mathbb{R}^d$, the advection-diffusion equation
for the scalar $u : \mathcal{E}_h \to \mathbb{R}$ is given by
\begin{subequations}
  \begin{align}
    \label{eq:advdif_a}
    \partial_t u + a \cdot \nabla u - \nu \nabla^2 u &= f  && \text{in } \mathcal{E}_h,
    \\
    \label{eq:advdif_b}
    -\zeta u (n_t + a \cdot n) + \nu \nabla u \cdot n &= g_N && \text{on } \partial\mathcal{E}_N,
    \\
    \label{eq:advdif_c}
    u &= g_D && \text{on } \partial\mathcal{E}_D,
  \end{align}
  \label{eq:advdif}
\end{subequations}
where $g_N :\mathcal{Q}_N \to \mathbb{R}$ is a suitably smooth
function and $\zeta$ is an indicator function for the inflow boundary
of $\mathcal{E}$, i.e., where $(n_t + a \cdot n) < 0$. Note that the
initial condition $u(0,x) = g_N(0,x)$ is imposed by
\cref{eq:advdif_b}. Using the definition of the space-time advective
velocity and the space-time gradient introduced in
\cref{sec:introduction}, the space-time formulation of
\cref{eq:advdif} is given by
\begin{subequations}
  \begin{align}
    \label{eq:stadvdif_a}
    \widehat{a} \cdot \widehat{\nabla} u - \nu \nabla^2 u &= f  && \text{in } \mathcal{E}_h,
    \\
    \label{eq:stadvdif_b}
    -\zeta u \widehat{a}_n + \nu \nabla u \cdot n &= g_N && \text{on } \partial\mathcal{E}_N,
    \\
    \label{eq:stadvdif_c}
    u &= g_D && \text{on } \partial\mathcal{E}_D,
  \end{align}
  \label{eq:stadvdif}
\end{subequations}
where
$\widehat{a}_n = \widehat{n} \cdot \widehat{a} = n_t + a\cdot n$. We
see that the time-dependent advection-diffusion problem
\cref{eq:advdif} is a steady state problem in $(d+1)$-dimensional
space-time.

\subsection{Space-time meshes}
\label{ss:st-meshes}
The two approaches to meshing a space-time domain $\mathcal{E}_h$ are
the slab-by-slab approach and the all-at-once approach.  In the
\emph{slab-by-slab} approach, the time interval $[t^0, t^N]$ is
partitioned into time levels $t^0 < t^1 < \cdots < t^N$. The $n$-th
time interval is defined as $I^n = (t^n, t^{n+1})$ and its length is
the ``time-step'', denoted by $\Delta t^n = t^{n+1} - t^n$. The
space-time domain $\mathcal{E}_h$ is then divided into space-time
slabs
$\mathcal{E}_h^n = \mathcal{E}_h \cap (I^n \times \mathbb{R}^d)$. Note
that each space-time slab $\mathcal{E}_h^n$ is bounded by
$\Omega_h(t^n)$, $\Omega_h(t^{n+1})$, and
$\mathcal{Q}_{\mathcal{E}_h}^n = \partial\mathcal{E}_h^n \backslash
(\Omega_h(t^n) \cup \Omega_h(t^{n+1}))$. A space-time triangulation
$\mathcal{T}_h^n$ is then introduced for each space-time slab
$\mathcal{E}_h^n$ using standard spatial meshing techniques. In this
paper, we use space-time simplices (see, e.g. \cite{Horvath:2019,
  Horvath:2020, Wang:2015}) as opposed to space-time hexahedra (see
e.g. \cite{Ambati:2007, Vegt:2002, Ven:2008}).

In the \emph{all-at-once} approach, a space-time triangulation
$\mathcal{T}_h := \cup_j\mathcal{K}_j$ of the full space-time domain
$\mathcal{E}_h$ is introduced. This triangulation consists of
non-overlapping space-time simplices
$\mathcal{K} \subset \mathbb{R}^{d+1}$. There are no clear time levels
except for the time level at $x_0=t^0$ and $x_0=t^N$ and the
space-time mesh may be fully unstructured. In particular, this
naturally allows for arbitrary adaptive mesh refinement (AMR) in space
and time. Note, we do not consider hanging nodes in this paper
although hanging nodes in space and time are possible within the
space-time framework.

In \cref{fig:ST_elements} we plot space-time elements in a
slab-by-slab approach and in an all-at-once approach in
$(1+1)$-dimensional space-time.

\begin{figure}[h!]
  \centering
  \begin{tikzpicture}[scale=0.75]
        \draw[->, thick] (0,-2.5)--(0,2.5) node[above]{$x_0$};
    \draw[->, thick] (0,-2.5)--(5,-2.5) node[right]{$x$};
    \draw[dashed] (0, -1)--(5, -1);
    \draw(6, -1) node{$\Omega_h(t^n)$};
    \draw[dashed] (0, 1)--(5, 1);
    \draw(6,  1) node{$\Omega_h(t^{n+1})$};
    \draw[thick] (1.5, -1)--(3.5, -1);
    \draw[thick] (2, 1)--(3.5, -1);
    \draw[thick] (2, 1)--(3, 1);
    \draw[thick] (1.5, -1)--(2, 1);
    \draw[thick] (3.5, -1)--(3, 1);
    \draw(-0.7, -1) node{$t^n$};
    \draw(-0.7, 1) node{$t^{n+1}$};
    \draw(2.5, -0.5) node{$\mathcal{K}_{l}$};
    \draw(2.8, 0.5) node{$\mathcal{K}_{j}$};
    \draw[->] (2.5, 1.6)--(2.5, 1.1);
    \draw(2.5, 2.0) node{$K_j^{n+1}$};
    \draw[->] (2.5, -1.6)--(2.5, -1.1);
    \draw(2.5, -2.0) node{$K_l^{n}$};
    \draw[->, thick] (10,-2.5)--(10,2.5) node[above]{$x_0$};
    \draw[->, thick] (10,-2.5)--(15,-2.5) node[right]{$x$};
    \draw[thick] (11.5, -1)--(13.5, -0.5);
    \draw[thick] (12, 1)--(13.5, -0.5);
    \draw[thick] (12, 1)--(13, 1.5);
    \draw[thick] (11.5, -1)--(12, 1);
    \draw[thick] (13.5, -0.5)--(13, 1.5);
    \draw(12.3, -0.2) node{$\mathcal{K}_l$};
    \draw(12.8, 0.7) node{$\mathcal{K}_j$};
      \end{tikzpicture}
  \caption{Examples of two neighboring elements in $(1+1)$-dimensional
    space-time.  Left: An example of space-time elements in a
    slab-by-slab approach. The space-time mesh is layered by
    space-time slabs. Here the elements lie in space-time slab
    $\mathcal{E}_h^n$. Right: An example of space-time elements in an
    all-at-once approach. There are no clear time levels for
    $t^0 < x_0 < t^N$.}
  \label{fig:ST_elements}
\end{figure}
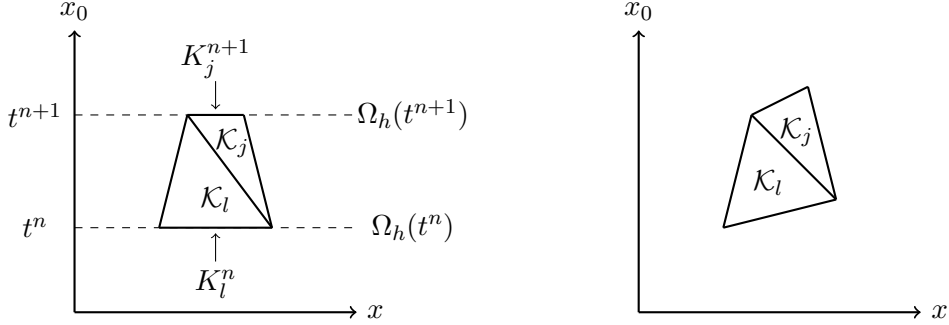

\subsection{The space-time HDG method}
\label{ss:sthdg}

Consider a space-time element $\mathcal{K} \in \mathcal{T}_h$ in an
all-at-once or slab-by-slab mesh. On the boundary of a space-time
element $\partial\mathcal{K}$ we will denote the outward unit
space-time normal vector by
$\widehat{n}^{\mathcal{K}} = (n_t^{\mathcal{K}},
n^{\mathcal{K}})$. Two adjacent space-time elements $\mathcal{K}^+$
and $\mathcal{K}^-$ share an interior space-time facet
$\mathcal{S} := \partial\mathcal{K}^+ \cap \partial\mathcal{K}^-$. A
facet of $\partial\mathcal{K}$ that lies on the space-time boundary
$\partial\mathcal{E}_h$ is called a boundary facet. The set of all
facets is denoted by $\mathcal{F}$ and the union of all facets by
$\Gamma_0$. For ease of notation, we will drop the subscripts and
superscripts when referring to space-time elements, their boundaries,
and outward unit normal vectors in the remainder of this article.

We require the following finite element spaces:
\begin{align*}
  V_h &:= \cbr[0]{v_h \in L^2(\mathcal{E}_h) \, :\, v_h|_{\mathcal{K}}
        \in P_p(\mathcal{K}),\ \forall\mathcal{K} \in \mathcal{T}_h},
  \\
  M_h &:= \cbr[0]{\mu_h \in L^2(\mathcal{F}) \, : \, \mu_h|_{\mathcal{S}}
        \in P_p(\mathcal{S}),\ \forall \mathcal{S} \in \mathcal{F},\
        \mu_h=0 \text{ on } \partial\mathcal{E}_D},
\end{align*}
where $P_p(D)$ is the set of polynomials of degree $p$ on a domain
$D$. We furthermore introduce $V_h^{\star} := V_h \times M_h$. The
space-time HDG method for \cref{eq:stadvdif} is given by
\cite{Kirk:2019}: find $(u_h, \lambda_h) \in V_h^{\star}$ such that
\begin{equation}
  \label{eq:st-hdg}
  \mathcal{B}_h\del[1]{(u_h, \lambda_h), (v_h, \mu_h)}
  = \sum_{\mathcal{K}\in\mathcal{T}_h} \int_{\mathcal{K}} f v_h \dif \hat{x}
  + \int_{\partial\mathcal{E}_N} g \mu_h \dif s
  \quad \forall (v_h, \mu_h) \in V_h^{\star},
\end{equation}
where the bilinear form is defined as
\begin{equation}
  \label{eq:bilin}
  \begin{split}
    \mathcal{B}_h\big( (u, \lambda), & (v, \mu) \big)
    \\
    :=& \sum_{\mathcal{K} \in \mathcal{T}_h}
    \int_{\mathcal{K}} \del[1]{ - u\widehat{a}\cdot\widehat{\nabla}v + \nu \nabla u \cdot \nabla v} \dif \hat{x}
    + \int_{\partial\mathcal{E}_N} \tfrac{1}{2}\del[0]{ \widehat{a}_{n} + |\widehat{a}_{n}| } \lambda\mu \dif s
    \\
    &+ \sum_{\mathcal{K} \in \mathcal{T}_h} \int_{\partial\mathcal{K}} \sigma(u,\lambda,\widehat{n}) (v-\mu) \dif s
    - \sum_{\mathcal{K} \in \mathcal{T}_h} \int_{\partial\mathcal{K}} \nu(u-\lambda)\nabla v \cdot n \dif s.
  \end{split}
\end{equation}
Here
$\sigma(u,\lambda,\widehat{n}) := \sigma_a(u,\lambda,\widehat{n}) +
\sigma_d(u,\lambda,n)$ is the ``numerical flux'' on the cell
facets. The advective part of the numerical flux is an upwind flux in
both space and time, given by
\begin{equation*}
  \sigma_a(u,\lambda,\widehat{n})
  := \tfrac{1}{2}\del[1]{ \widehat{a}_{n}(u + \lambda)
    + |\widehat{a}_{n}|(u-\lambda)}.
\end{equation*}
The diffusive part of the numerical flux is similar to that of an
interior penalty method and is given by
\begin{equation}
  \label{eq:IPpenalty}
  \sigma_d(u,\lambda,n)
  := -\nu\nabla u \cdot n + \frac{\nu\alpha}{h_{\mathcal{K}}}(u-\lambda),
\end{equation}
with $h_{\mathcal{K}}$ the length measure of the element
$\mathcal{K}$, and $\alpha > 0$ a penalty parameter. It is shown in
\cite{Kirk:2019} that $\alpha$ needs to be sufficiently large to
ensure stability of the space-time HDG method.

\subsection{Sequential time-stepping using the slab-by-slab
  discretization}

The space-time HDG method \cref{eq:st-hdg} is the same for both the
slab-by-slab and all-at-once space-time approaches. However, for the
slab-by-slab approach we may write \cref{eq:st-hdg} in a form similar
to traditional time-integration techniques. For this we require the
following finite element spaces:
\begin{align*}
  V_h^n &:= \cbr[0]{v_h \in L^2(\mathcal{E}_h^n) \, :\, v_h|_{\mathcal{K}} \in P_p(\mathcal{K}),\
          \forall\mathcal{K} \in \mathcal{T}_h^n},
  \\
  M_h^n &:= \cbr[0]{\mu_h \in L^2(\mathcal{F}^n) \, : \, \mu_h|_{\mathcal{S}} \in P_p(\mathcal{S}),\
          \forall \mathcal{S} \in \mathcal{F}^n,\ \mu_h=0 \text{ on } \partial\mathcal{E}_D^n},
\end{align*}
where $\mathcal{F}^n$ is the set of all facets in the slab
$\mathcal{E}_h^n$. We furthermore define
$V_h^{n, \star} := V_h^n \times M_h^n$. For the slab-by-slab approach,
we may write the space-time HDG method for \cref{eq:advdif} as: for
each space-time slab $\mathcal{E}_h^n$, $n=0,1,\cdots,N-1$, find
$(u_h, \lambda_h) \in V_h^{n, \star}$ such that
\begin{equation}
  \label{eq:st-hdg-sbs}
  \mathcal{B}_h^n\del[1]{(u_h, \lambda_h), (v_h, \mu_h)}
  = \sum_{\mathcal{K}\in\mathcal{T}_h^n} \int_{\mathcal{K}} f v_h \dif \hat{x}
  + \int_{\partial\mathcal{E}_N^n} g \mu_h \dif s,
\end{equation}
for all $(v_h, \mu_h) \in V_h^{n, \star}$, where
$\mathcal{B}_h^n(\cdot, \cdot)$ is defined as \cref{eq:bilin} but with
$\mathcal{T}_h$ and $\partial\mathcal{E}_N$ replaced by, respectively,
$\mathcal{T}_h^n$ and $\partial\mathcal{E}_N^n$. The slab-by-slab
approach is similar to traditional time-integration techniques in that
the local systems are solved one space-time slab after another. The linear
systems arising from space-time finite elements
resemble those that arise from fully implicit Runge--Kutta methods
(e.g., see \cite{makridakis2006posteriori, schotzau2000time}).

Well-posedness and convergence of the slab-by-slab space-time HDG
method \cref{eq:st-hdg-sbs} was proven in
\cite{Kirk:2019}. Furthermore, motivated by the fact that the spatial
mesh size $h_K$ and the time-step $\Delta t$ may be different, an a
priori error analysis was presented in \cite{Kirk:2019}, resulting in
optimal error bounds that are anisotropic in $h_K$ (a measure of the
mesh size in
spatial direction) and $\Delta t$. It is shown, however, that $\Delta t$
and $h_K$ need to be refined simultaneously to obtain these optimal
error bounds, and that refining only in time or only in space may lead
to divergence of the error. To this end, all-at-once solvers seem like
the natural solution for efficient parallel simulations, where
simultaneous local adaptivity in space and time is easily handled.

\subsection{The discretization}
\label{ss:discretization}
Let $U \in \mathbb{R}^r$ be the vector of expansion coefficients of
$u_h$ with respect to the basis for $V_h$ and let
$\Lambda \in \mathbb{R}^q$ be the vector of expansion coefficients of
$\lambda_h$ with respect to the basis for $M_h$. The space-time HDG
method \cref{eq:st-hdg} can then be expressed as the all-at-once
system of linear equations
\begin{equation}
  \label{eq:linsys}
  \begin{bmatrix}
    A & B \\ C & D
  \end{bmatrix}
  \begin{bmatrix}
    U \\ \Lambda
  \end{bmatrix}
  =
  \begin{bmatrix}
    F \\ G
  \end{bmatrix},
\end{equation}
where $A$, $B$, $C$, and $D$ are matrices obtained from the
discretization of $\mathcal{B}_h((\cdot, 0), (\cdot, 0))$,
$\mathcal{B}_h((0,\cdot ), (\cdot, 0))$,
$\mathcal{B}_h((\cdot, 0), (0, \cdot))$, and
$\mathcal{B}_h((0, \cdot ), (0, \cdot))$, respectively.

For the slab-by-slab approach, the linear system \cref{eq:linsys} can
be decoupled into smaller linear systems that are solved in each time
slab $\mathcal{E}_h^n$. In this case, $U \in \mathbb{R}^r$ is the
vector of expansion coefficients of $u_h$ with respect to the basis
for $V_h^n$ and $\Lambda \in \mathbb{R}^q$ is the vector of expansion
coefficients of $\lambda_h$ with respect to the basis for
$M_h^n$. Furthermore, $A$, $B$, $C$, and $D$ are then the matrices obtained
from the discretization of $\mathcal{B}_h^n((\cdot, 0), (\cdot, 0))$,
$\mathcal{B}_h^n((0,\cdot ), (\cdot, 0))$,
$\mathcal{B}_h^n((\cdot, 0), (0, \cdot))$, and
$\mathcal{B}_h^n((0, \cdot ), (0, \cdot))$, respectively.

The space-time HDG discretization is such that $A$ is a block-diagonal
matrix. Using $U = A^{-1}(F-B\Lambda)$ we eliminate $U$ from
\cref{eq:linsys} resulting in the following reduced system for
$\Lambda$:
\begin{equation}
  \label{eq:systemLambda}
  S\Lambda = H,
\end{equation}
where $S = D -CA^{-1}B$ is the Schur complement of the block matrix
in \cref{eq:linsys}, and $H=G-CA^{-1}F$. Having eliminated the element
degrees-of-freedom via static condensation, the linear system
\cref{eq:systemLambda} is significantly smaller than
\cref{eq:linsys}. However, for the space-time HDG method to be
efficient, we still require a fast solver for the reduced non-symmetric
problem \cref{eq:systemLambda}, which is discussed in the following
section.

\section{Approximate ideal restriction (AIR) AMG}
\label{sec:preconditioning}

AMG is traditionally designed for elliptic
problems in space or sequential time stepping of parabolic problems,
where the resulting linear systems are (nearly) symmetric positive
definite or M-matrices. However, a number of papers in recent
years have considered extensions of AMG to the nonsymmetric setting,
e.g., \cite{Notay:2012, Wiesner:2014, Brezina:2010dm, Sala:2008cv,
  Manteuffel:2017}. In particular, a new AMG method based on a local
approximate ideal restriction ($\ell$AIR; moving forward we simply
refer to it as AIR) was developed in
\cite{Manteuffel:2019, Manteuffel:2018} specifically for
advection-dominated problems and upwinded discretizations.
Noting that \cref{eq:stadvdif} is a ``steady''
advection-dominated problem in $(d+1)$-dimensional space-time, and that
AIR is a robust solver for advection dominated problems, motivates
the use of AIR as a preconditioner for the space-time linear system
\cref{eq:systemLambda}.

As a brief review, recall that multigrid methods solve
$A\mathbf{x} =\mathbf{b}$ by applying a coarse-grid correction based
on interpolation and restriction operators,
$\mathbf{x}^{(i+1)} = \mathbf{x}^{(i)} +
P(RAP)^{-1}R\mathbf{r}^{(i)}$, for matrix $A$, interpolation $P$,
restriction $R$, and residual
$\mathbf{r}^{(i)} = \mathbf{b} -A\mathbf{x}^{(i)}$. Classical AMG is
based on a partitioning of DOFs into fine (F-) and coarse (C-) points,
where $A$ can then be expressed in block form as
\begin{align*}
  A=\begin{bmatrix}
    A_{ff} & A_{fc} \\
    A_{cf} & A_{cc}
  \end{bmatrix}.
\end{align*}
AIR is a reduction method based on the principle that if we use
the so-called ideal restriction operator,
$R_{\text{ideal}} = \sbr[0]{-A_{cf}A_{ff}^{-1} \quad I}$ with any
interpolation (in MATLAB notation) $P = [W; I]$, coarse-grid correction
eliminates all errors at C-points; following this with an effective
relaxation on F-points will guarantee a rapidly convergent method
\cite[Section 2.3]{Manteuffel:2019}.
Due to the $A_{ff}^{-1}$ term in $R_{\text{ideal}}$, it is not practical to
form $R_{\text{ideal}}$ explicitly. However, AIR appeals to the
observation that for upwinded advective discretizations, one can achieve
cheap, accurate, and sparse approximations $R \approx R_{\text{ideal}}$.

\subsection{Coarsening in space-time}\label{ss:coarsen}

For problems with strong anisotropy or advective components, it is
often helpful or even necessary to semi-coarsen along the direction of
anisotropy/advection for an effective multigrid method (e.g.,
\cite{Wesseling:2001}). On a high-level, we claim that
one of the primary difficulties in applying common (multilevel) PinT
schemes to advective/hyperbolic problems is the separate treatment of
temporal and spatial variables. A natural result of this is that
coarsening performed separately in space and time is often unable to
align with hyperbolic characteristics in space-time.  Conversely, by
treating space and time all-at-once, it is natural for coarsening to
align with characteristics, which provides an important piece of a
scalable multilevel method.

\Cref{fig:CFpoints} demonstrates how classical AMG coarsening
\cite{Ruge:1987} applied to a hyperbolic 2d-space/1d-time HDG
discretization naturally applies semi-coarsening along the direction
of (space-time) characteristics. For clarity, examples are shown in
two-dimensional subdomains for the problem described in
\cref{ss:efficiencyZZ}, with plots for the advective field and the
corresponding CF-splitting.  The velocity fields given in (a) and (b)
correspond to CF-splittings in (d) and (e), respectively. Note that
for both cases, we largely see stripes of fine and coarse points
orthogonal to the flow direction, which is exactly semi-coarsening
along the characteristics. Similarly, in (c), note that in the
$[0,0.2]\times[0,0.2]$ spatial subdomain (for all time), there is
effectively no spatial advection, and thus the space-time advective
field is only traveling forward in time. Plots (f)--(i) demonstrate an
effective semi-coarsening in time, where we mark coarse points on the
time levels (see (f) and (i)) and fine points on interior time DOFs
(see (g) and (h)).

\begin{figure}[!htb]
  \centering
  \subfloat[Velocity field, $x=0.1861$]{
    \includegraphics[width=0.30\textwidth]{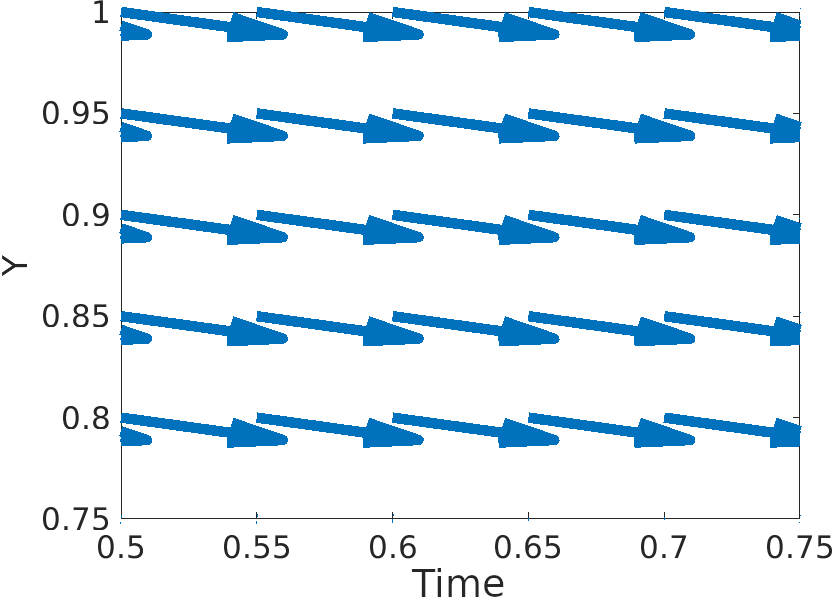}
  }
  \subfloat[Velocity field, $x=0.5790$]{
    \includegraphics[width=0.30\textwidth]{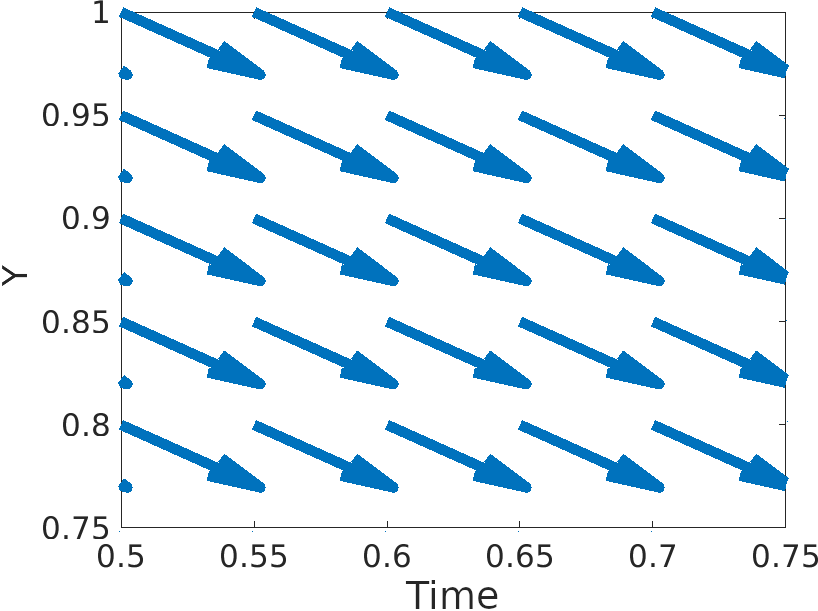}
  }
  \subfloat[Velocity field, any fixed $t$]{
    \includegraphics[width=0.30\textwidth]{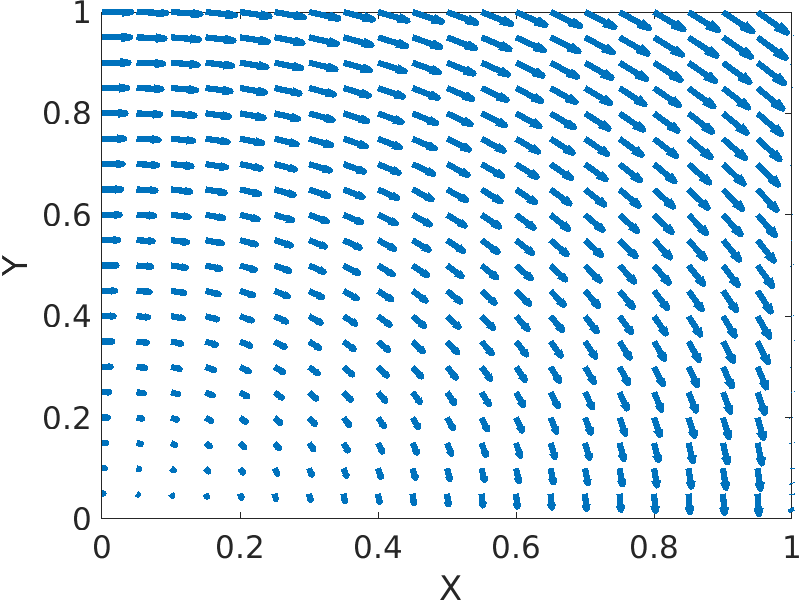}
  }
  \\
  \subfloat[$x=0.1861$]{
    \includegraphics[width=0.30\textwidth]{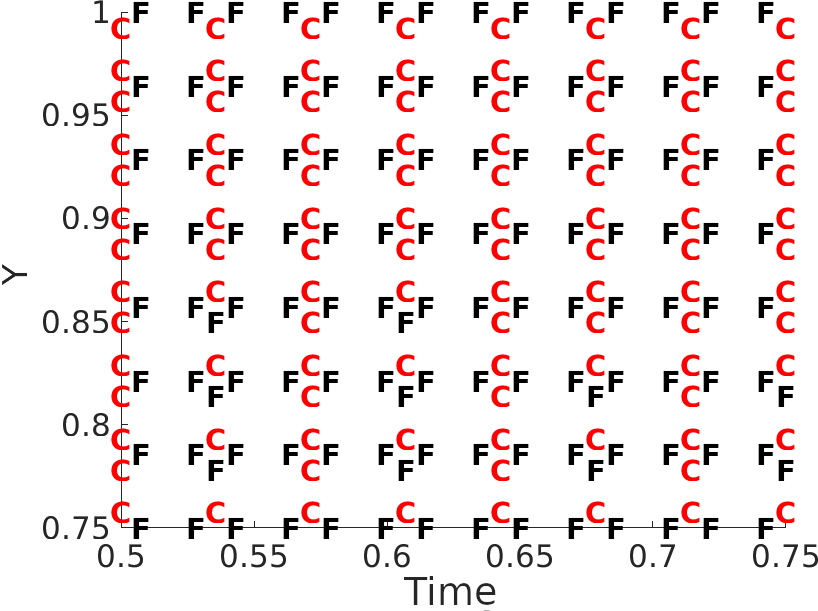}
  }
  \subfloat[$x=0.5790$]{
    \includegraphics[width=0.30\textwidth]{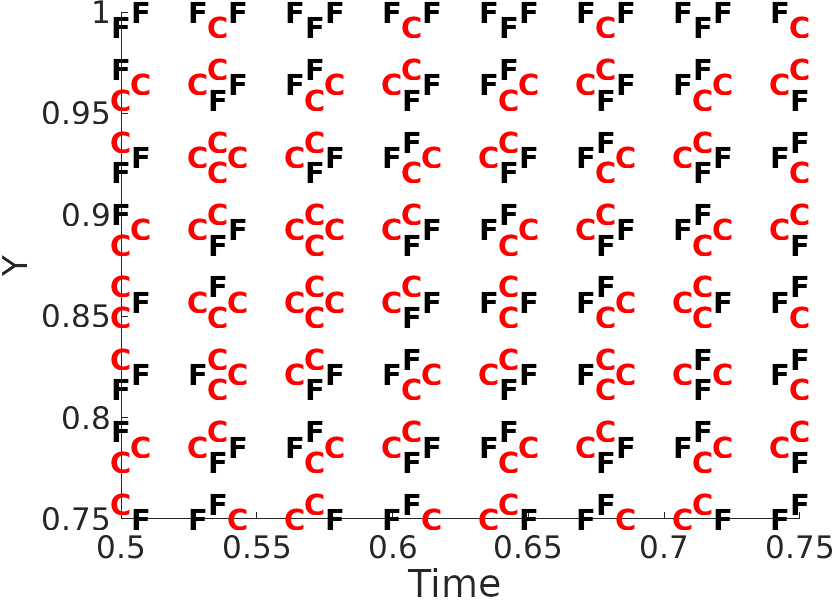}
  }
  \subfloat[$t=0.3571$]{
  \includegraphics[width=0.30\textwidth]{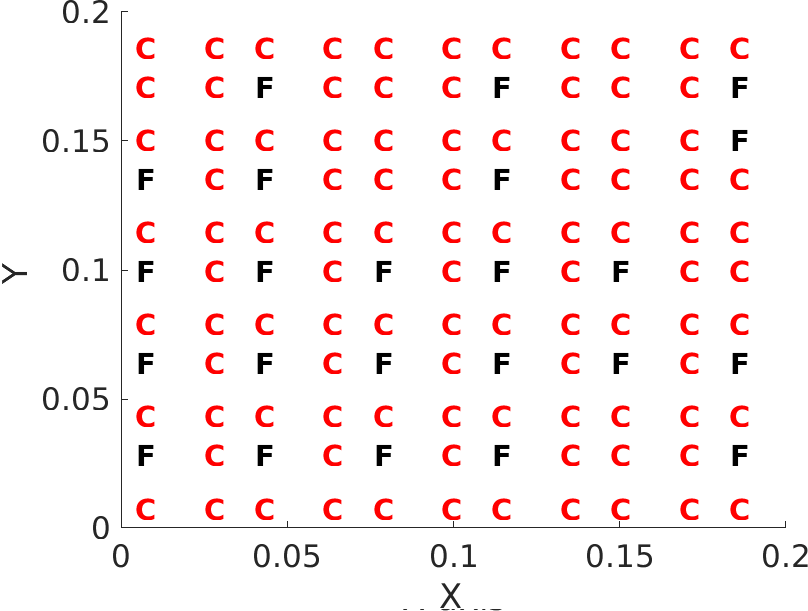}
  }
  \\
  \subfloat[$t=0.3646$]{
  \includegraphics[width=0.30\textwidth]{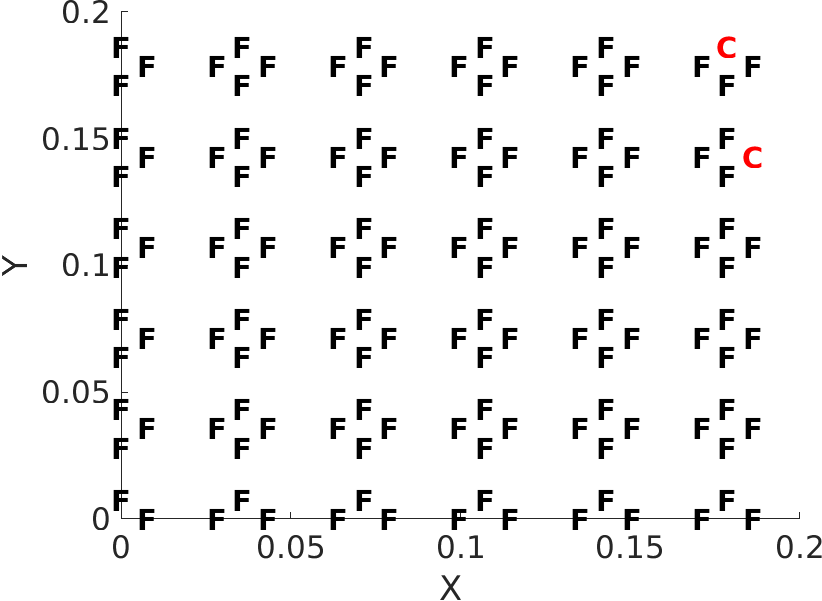}
  }
  \subfloat[$t=0.3853$]{
  \includegraphics[width=0.30\textwidth]{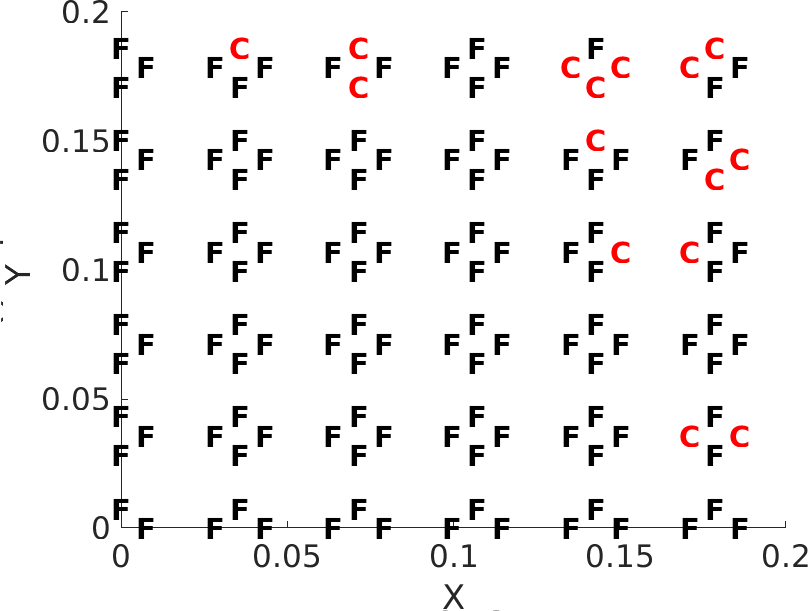}
  }
  \subfloat[$t=0.3928$]{
  \includegraphics[width=0.30\textwidth]{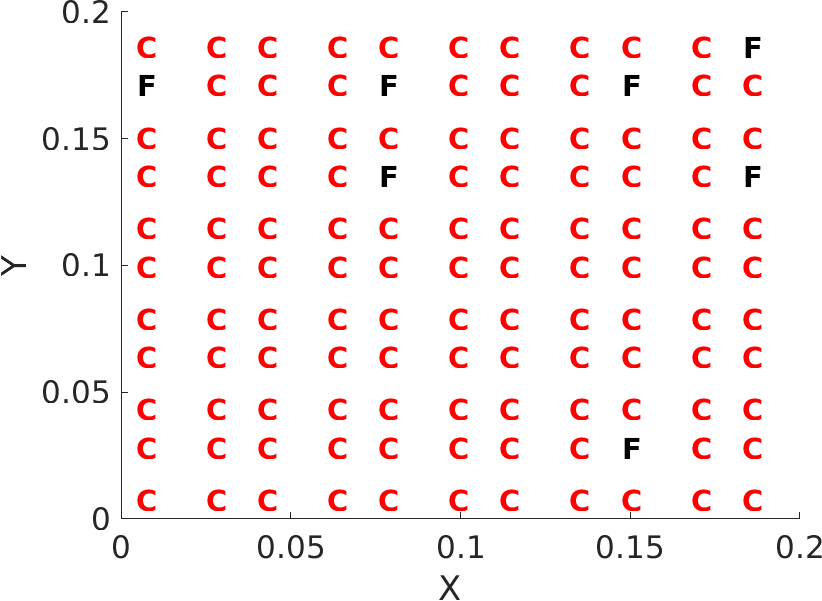}
  }
        \caption{Red points are C-points and black points are F-points
          for the hyperbolic problem
          from~\cref{ss:efficiencyZZ}. Distribution of the C- and F-
          points follow the velocity fields, showing semi-coarsening
          along characteristics.}
  \label{fig:CFpoints}
\end{figure}

\subsection{Relaxation and element ordering}
\label{ss:ordering}

Upwinded discontinuous discretizations of linear hyperbolic problems
have the benefit that the mesh elements can typically be reordered to
be (element) block lower triangular. The corresponding linear system
can then be solved directly using a forward solve. Although this is
not scalable in parallel (because each process must wait for the
previous to finish its solve), it provides an excellent relaxation
scheme when each core inverts the subdomain stored
on-process. Such a method is commonly used in the transport
community to avoid the parallel cost and complexity of a full forward
solve, and was shown to provide strong convergence when used with AIR
in \cite{hanophy2020parallel}.

Here, we show that an analogous property holds for HDG discretizations
of advection (steady or space-time). We do
so by noting that the existence of such an ordering is equivalent to
proving that the graph of the discretization matrix is acyclic. Assume
all mesh elements are convex, let $\{\mathcal{K}_i\}$ denote the set of
all $n_e$ elements for a given mesh, and let
$\mathbb{E}$ denote the graph of connections between elements, where
$\mathbb{E}_{ij} = 1$ if $\exists$ a connection from
$\mathcal{K}_i\mapsto \mathcal{K}_j$ with
respect to the given velocity field and $\mathbb{E}_{ij} = 0$ otherwise. Let
$\{\mathcal{S}_{ij}\}$ denote the set of all $n_f$ outgoing faces, with the
subscript $\mathcal{S}_{ij}$ indicating a connection
$\mathcal{K}_i\mapsto \mathcal{K}_j \in \mathbb{E}$, and $\mathbb{F}$
denoting the graph of connections between faces. Moreover note that
\begin{equation}
  \label{eq:f_iff}
  \mathcal{S}_{ij} \mapsto \mathcal{S}_{jk} \hspace{3ex} \textnormal{if and only if}
  \hspace{3ex} \mathcal{K}_i\mapsto \mathcal{K}_j \hspace{1ex}
  \textnormal{and}\hspace{1ex} \mathcal{K}_j\mapsto \mathcal{K}_k.
\end{equation}

\begin{lemma}
  \label{lem:order}
  Suppose $\mathbb{E}$ is a directed acyclic graph, and the elements
  $\{\mathcal{K}_i\}$ are ordered such that $\mathbb{E}$ is lower
  triangular. Furthermore, numerate
    faces $\mathcal{S}_{ij}$ with respect
  to index $i$ and then $j$, for example, $\{\mathcal{S}_{01},
  \mathcal{S}_{02},\mathcal{S}_{12},\mathcal{S}_{23},...\}$. Then,
  $\mathbb{F}$ is also a directed acyclic graph and lower triangular
  in this ordering.
\end{lemma}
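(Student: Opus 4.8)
The plan is to show that $\mathbb{F}$ is acyclic and that the prescribed numbering is a topological sort of it, by pushing everything back to the known acyclicity of $\mathbb{E}$ via the equivalence \eqref{eq:f_iff}. First I would observe that any directed path in $\mathbb{F}$, say $\mathcal{S}_{i_0i_1}\mapsto \mathcal{S}_{i_1i_2}\mapsto\cdots\mapsto\mathcal{S}_{i_{m-1}i_m}$, induces by \eqref{eq:f_iff} a directed walk $\mathcal{K}_{i_0}\mapsto\mathcal{K}_{i_1}\mapsto\cdots\mapsto\mathcal{K}_{i_m}$ in $\mathbb{E}$: each edge $\mathcal{S}_{i_{\ell-1}i_\ell}\mapsto\mathcal{S}_{i_\ell i_{\ell+1}}$ forces $\mathcal{K}_{i_{\ell-1}}\mapsto\mathcal{K}_{i_\ell}$ and $\mathcal{K}_{i_\ell}\mapsto\mathcal{K}_{i_{\ell+1}}$, so consecutive vertex-indices match up and the walk is well defined. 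A cycle in $\mathbb{F}$ would have $\mathcal{S}_{i_0i_1}=\mathcal{S}_{i_{m-1}i_m}$, hence $i_0=i_{m-1}$ and $i_1=i_m$, so the induced walk in $\mathbb{E}$ returns to $\mathcal{K}_{i_0}$ after at least one nontrivial step, i.e.\ $\mathbb{E}$ contains a cycle — contradicting the hypothesis. (One should note $m\ge 2$ for a genuine cycle in $\mathbb{F}$, so the induced walk really does contain an edge of $\mathbb{E}$.) Therefore $\mathbb{F}$ is acyclic.

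Next I would verify lower-triangularity in the stated ordering. Order the faces lexicographically by $(i,j)$, so $\mathcal{S}_{ij}\prec\mathcal{S}_{i'j'}$ iff $i<i'$, or $i=i'$ and $j<j'$. Suppose $\mathcal{S}_{ij}\mapsto\mathcal{S}_{jk}$ is an edge of $\mathbb{F}$; we must show $\mathcal{S}_{ij}\prec\mathcal{S}_{jk}$, i.e.\ that the edge goes from a lower-numbered to a higher-numbered face (equivalently that $\mathbb{F}$, written in this ordering, has its nonzeros below the diagonal). By \eqref{eq:f_iff} we have $\mathcal{K}_i\mapsto\mathcal{K}_j$ in $\mathbb{E}$, and since the elements are ordered so that $\mathbb{E}$ is lower triangular, $i<j$. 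Comparing $\mathcal{S}_{ij}$ and $\mathcal{S}_{jk}$ lexicographically: their first indices are $i$ and $j$ with $i<j$, so $\mathcal{S}_{ij}\prec\mathcal{S}_{jk}$ regardless of $k$. Hence every edge of $\mathbb{F}$ respects the ordering, which is precisely the statement that $\mathbb{F}$ is lower triangular in this numbering. (Acyclicity also follows a posteriori from the existence of a consistent ordering, giving an alternative to the first paragraph.)

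I do not anticipate a serious obstacle here; the content is essentially the functoriality of the line-graph-type construction encoded in \eqref{eq:f_iff}, plus a bookkeeping check on the lexicographic order. The one point to be careful about is the definition of "cycle" in $\mathbb{F}$ and the degenerate case $m=1$: a single self-loop $\mathcal{S}_{ij}\mapsto\mathcal{S}_{ij}$ would, via \eqref{eq:f_iff}, require $\mathcal{K}_i\mapsto\mathcal{K}_j$ and $\mathcal{K}_j\mapsto\mathcal{K}_i$, already a $2$-cycle in $\mathbb{E}$, so even this case is covered — but I would state explicitly that the convexity assumption on elements (and the resulting geometric meaning of "connection $\mathcal{K}_i\mapsto\mathcal{K}_j$" as flow through the shared face) is what makes \eqref{eq:f_iff} hold, so that nothing beyond the combinatorial hypothesis on $\mathbb{E}$ is needed. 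Finally I would remark that the same argument shows the diagonal of $\mathbb{F}$ is zero whenever that of $\mathbb{E}$ is, so "lower triangular" may be read as strictly lower triangular if desired.
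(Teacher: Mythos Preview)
Your approach is essentially the paper's: push edges of $\mathbb{F}$ back to $\mathbb{E}$ via \eqref{eq:f_iff} and use the lower-triangularity of $\mathbb{E}$ to compare first indices lexicographically. One small gap: throughout, you tacitly assume every edge of $\mathbb{F}$ has the shape $\mathcal{S}_{ij}\mapsto\mathcal{S}_{jk}$, i.e.\ the target's first index equals the source's second. But \eqref{eq:f_iff} as stated only characterizes edges of that particular shape; it does not by itself exclude edges $\mathcal{S}_{ij}\mapsto\mathcal{S}_{ik}$ between two outgoing faces of the \emph{same} element $\mathcal{K}_i$. For such an edge your lex comparison would need $j<k$, which nothing in $\mathbb{E}$ guarantees. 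The paper closes this hole explicitly by invoking convexity of the elements to rule out $\mathcal{S}_{ij}\mapsto\mathcal{S}_{ik}$ connections. You do mention convexity in your closing remarks, but only as justification for \eqref{eq:f_iff}, not as the device forcing all $\mathbb{F}$-edges into the $\mathcal{S}_{ij}\mapsto\mathcal{S}_{jk}$ pattern; make that role explicit and the argument is complete. Your separate acyclicity paragraph is correct but, as you yourself note, redundant once lower-triangularity is in hand.
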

\begin{proof}
  Because $\mathbb{E}$ is lower triangular, $\not\exists$
  $\mathcal{K}_i\mapsto \mathcal{K}_j$ if $i > j$. It follows by definition that $i < j$
  for all faces $\mathcal{S}_{ij}$. Now, suppose there exists a path
  $\mathcal{S}_{ij}\mapsto \mathcal{S}_{jk}$ in $\mathbb{F}$ such that
  $i > k$. By \cref{eq:f_iff}, this is true if and only if
  $\mathcal{K}_i\mapsto \mathcal{K}_j\mapsto \mathcal{K}_k$. However, this is a contradiction to
  the assumption of $\mathbb{E}$ being lower triangular. In addition note that by the convexity of elements,
  $\not\exists$ connections $\mathcal{S}_{ij}\mapsto \mathcal{S}_{ik}$, that is,
  connections between outgoing faces with respect to the velocity
  field on the same element. Enumerating $\{\mathcal{S}_{ij}\}$ first by
  index $i$, then (arbitrarily) by index $j$ as the set of faces
  $\{\widehat{\mathcal{S}}_\ell\}$
  implies $\not\exists$ path $g_{ij}\in\mathbb{F}$,
  $g:\widehat{\mathcal{S}}_i\mapsto \widehat{\mathcal{S}}_j$,
  such that $i > j$, which completes the proof.
\end{proof}

\Cref{lem:order} is useful in that an ordering can be determined for
an on-process block Gauss--Seidel relaxation which exactly inverts the
advective component in the case of no cycles in the mesh, where the
block size is given by the number of DOFs in a given element face. Such
a relaxation scheme is explored numerically in
\cref{ss:efficiencyZZ}.

\section{Numerical simulations}
\label{sec:results}

This section demonstrates the effectiveness of AIR as a
preconditioner for BiCGSTAB to solve the linear system \cref{eq:systemLambda},
including on moving time-dependent domains (\cref{ss:gaussianpulse}), and
when applying space-time AMR to an interior front problem (\cref{ss:efficiencyZZ}).
All test cases have been implemented in the Modular Finite
Element Method (MFEM) library \cite{mfem-library} with solver support
from HYPRE \cite{hypre-library}. Furthermore, we choose the penalty
parameter in \cref{eq:IPpenalty} as $\alpha = 10p^2$ where $p$ is the
order of the polynomial approximation (see, for example
\cite{Riviere:book}). Unless otherwise specified, AIR is constructed with
distance-one connections for building $R$, with strength tolerance $0.3$;
1-point interpolation \cite{Manteuffel:2019}; no pre-relaxation;
post-forward-Gauss-Seidel relaxation (on process), first on F-points,
followed by all points; Falgout coarsening, with strength tolerance $0.2$;
and as an acceleration method for BiCGSTAB, applied to the HDG space-time
matrix, scaled on the left by the facet block-diagonal inverse. All parallel
simulations are run on the LLNL Quartz machine.

\subsection{Rotating Gaussian pulse on a time-dependent domain}
\label{ss:gaussianpulse}

We first consider the solution of a two-dimensional rotating Gaussian
pulse on a time-dependent domain \cite{Rhebergen:2013}. We set
$a = (-4x_2, 4x_1)^T$ and $f = 0$. The boundary and initial conditions
are chosen such that the analytical solution is given by
\begin{equation}
  u(t, x_1, x_2) = \frac{\sigma^2}{\sigma^2 + 2 \nu t}\exp\left(-\frac{(\tilde{x}_1 - x_{1c})^2 + (\tilde{x}_2 - x_{2c})^2}
    {2\sigma^2 + 4 \nu t}\right),
\end{equation}
where $\tilde{x}_1 = x_1 \cos(4t) + x_2 \sin(4t)$,
$\tilde{x}_2 = -x_1 \sin(4t) + x_2 \cos(4t)$, and
$(x_{1c}, x_{2c}) = (-0.2, 0.1)$. Furthermore, we set $\sigma = 0.1$
and consider both a diffusion-dominated case with $\nu = 10^{-2}$ and
an advection-dominated case with $\nu = 10^{-6}$. The deformation of
the time-dependent domain is based on a transformation of the uniform
space-time mesh $(t, x_1^u, x_2^u) \in [0, T] \times [-0.5, 0.5]^2$
given by
\begin{equation}
  x_i = x_i^u + A(\tfrac{1}{2}-x_i^u)\sin(2\pi(\tfrac{1}{2}-x_i^*+t)) \quad i=1,2,
\end{equation}
where $(x_1^*,x_2^*)=(x_2^u,x_1^u)$, $A = 0.1$, and $T$ is the final
time. We show the solution on the time-dependent domain at different
time slices and on the full space-time domain (taking $T=1$) in
\cref{fig:rotatingpulse}.

\begin{figure}[!hbt]
  \centering
  \subfloat[The solution at $t=0.2$.]{\includegraphics[width=0.45\textwidth]{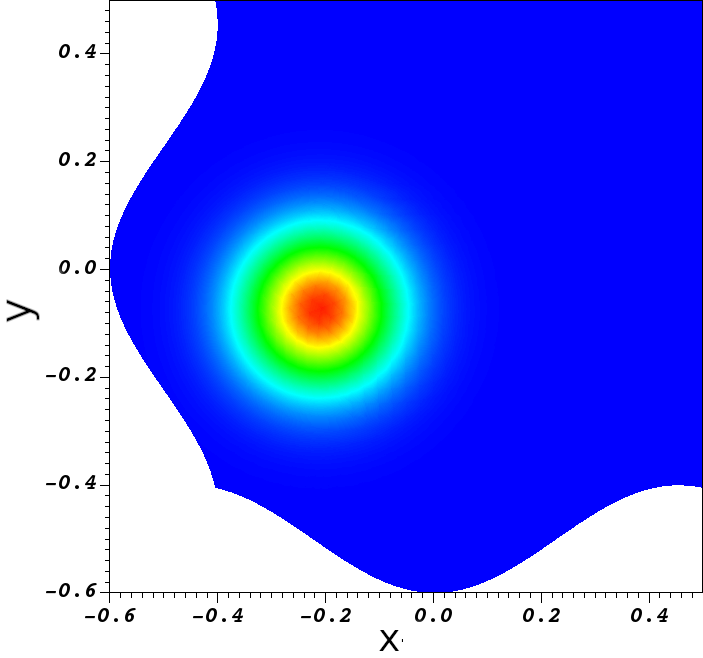}}
  \quad
  \subfloat[The solution at $t=0.5$.]{\includegraphics[width=0.45\textwidth]{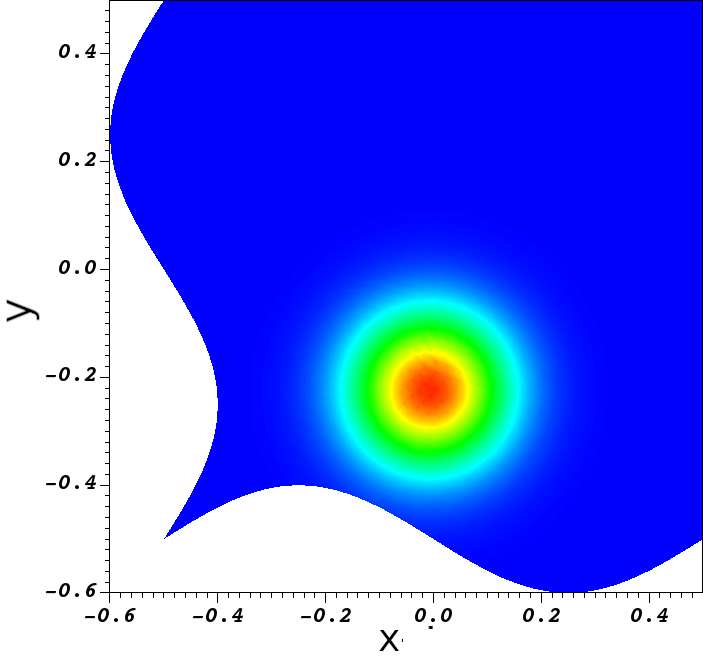}}
  \quad
  \subfloat[The solution at $t=0.8$.]{\includegraphics[width=0.45\textwidth]{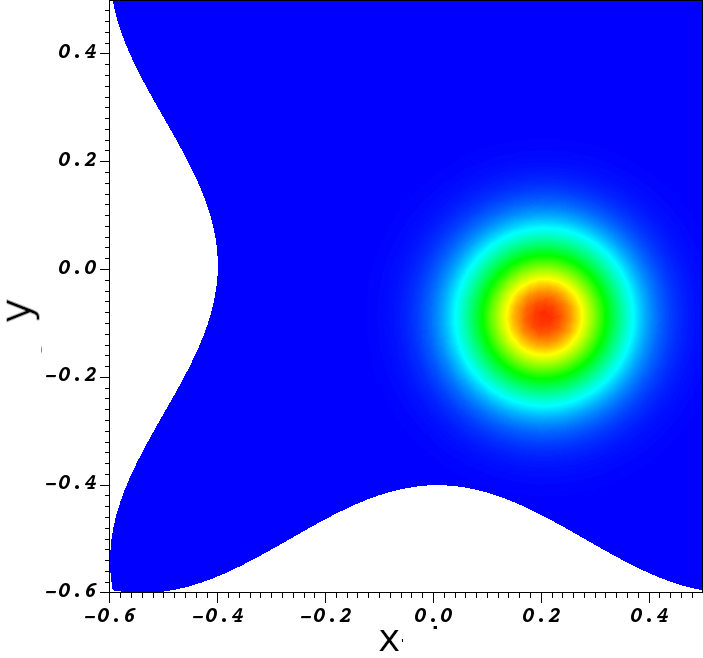}}
  \quad
  \subfloat[The solution over the space-time domain $\mathcal{E}_h$.]{\includegraphics[width=0.45\textwidth]{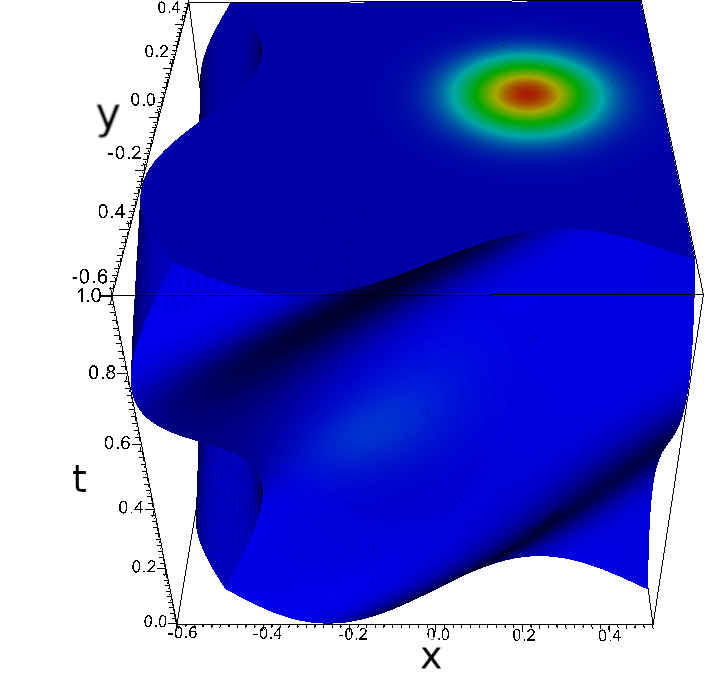}}
  \caption{The solution of the rotating Gaussian pulse test case as
    described in \cref{ss:gaussianpulse} at different time slices
    and on the full space-time domain when $\nu = 10^{-6}$.}
  \label{fig:rotatingpulse}
\end{figure}

\subsubsection*{Rates of convergence of the space-time error}

In \cref{table:slab_conv} we compute the rates of convergence of the
error in the space-time $L^2$-norm, i.e.,
\begin{equation*}
  \norm[0]{u-u_h}_{\mathcal{E}_h}
  := \del{\int_{\mathcal{E}_h} (u-u_h)^2 \dif \hat{x}}^{1/2}.
\end{equation*}
We compute this error taking $T=1$ and using linear, quadratic, and
cubic polynomial approximations to $u$. We observe optimal rates of
convergence, that is, the error in the space-time $L^2$-norm is of order
$\mathcal{O}(h^{p+1})$ when using a $p$-th order polynomial
approximation, for both the all-at-once and slab-by-slab
discretizations. This conclusion is true for both the advection- and
diffusion-dominated problem.

\begin{table}[!htb]
  \centering
  \caption{Error in the space-time $L^2$-norm and rate of
    convergence of a space-time HDG discretization of the
    advection-diffusion problem described in
    \cref{ss:gaussianpulse}, with $T=1$.}
  \label{table:slab_conv}
  \small
  \begin{tabular}{@{}rrrcrrrcrrr@{}}
    \multicolumn{11}{c}{{\bf Slab-by-slab}}\\
    \toprule
    \multicolumn{3}{c}{ } & \multicolumn{2}{c}{$p=1$} & \phantom{abc} & \multicolumn{2}{c}{$p=2$} & \phantom{abc} & \multicolumn{2}{c}{$p=3$} \\
    \cmidrule{4-5} \cmidrule{7-8} \cmidrule{10-11}
    $\nu$ & Slabs & Elements & Error & Rate && Error & Rate && Error & Rate \\
    & & per slab & &  && & &&  &  \\
    \midrule
    \multirow{4}{*}{$10^{-2}$}
    &     8 &    384  & 1.1$e$-2 &   - && 2.9$e$-3 &  -  && 8.4$e$-4 &   -\\
    &    16 &   1,536 & 3.4$e$-3 & 1.7 && 4.7$e$-4 & 2.6 && 5.8$e$-5 &   3.9\\
    &    32 &   6,144 & 8.4$e$-4 & 2.0 && 5.9$e$-5 & 3.0 && 3.7$e$-6 &   4.0\\
    &    64 &  24,576 & 2.1$e$-4 & 2.0 && 7.4$e$-6 & 3.0 && 2.3$e$-7 &   4.0\\
    \\
    \multirow{4}{*}{$10^{-6}$}
    &     8 &    384  & 1.9$e$-2 &  -  && 5.3$e$-3 &  -  && 1.3$e$-3 & -\\
    &    16 &   1,536 & 6.1$e$-3 & 1.6 && 8.5$e$-4 & 2.7 && 1.3$e$-4 & 3.4\\
    &    32 &   6,144 & 1.6$e$-3 & 2.0 && 1.1$e$-4 & 3.0 && 9.0$e$-6 & 3.8\\
    &    64 &  24,576 & 3.8$e$-4 & 2.0 && 1.4$e$-5 & 3.0 && 5.9$e$-7 & 3.9\\
    \bottomrule
    \multicolumn{11}{c}{} \\
    \multicolumn{11}{c}{{\bf All-at-once}} \\
    \toprule
    \multicolumn{2}{c}{ } & \phantom{abc} & \multicolumn{2}{c}{$p=1$} & \phantom{abc} & \multicolumn{2}{c}{$p=2$} & \phantom{abc} & \multicolumn{2}{c}{$p=3$} \\
    \cmidrule{4-5} \cmidrule{7-8} \cmidrule{10-11}
    $\nu$ & Elements && Error & Rate && Error & Rate && Error & Rate \\
    \midrule
    \multirow{4}{*}{$10^{-2}$}
    &  2,760   && 2.0$e$-2 & -   && 6.0$e$-3 &  -  && 1.7$e$-3 &   -\\
    & 22,080   && 5.8$e$-3 & 1.8 && 8.2$e$-4 & 2.9 && 1.2$e$-4 & 3.8\\
    &176,640   && 1.3$e$-3 & 2.1 && 9.5$e$-5 & 3.1 && 7.5$e$-6 & 4.0\\
    &1,413,120 && 3.0$e$-4 & 2.1 && 1.2$e$-5 & 3.1 && 4.6$e$-7 & 4.0\\
    \\
    \multirow{4}{*}{$10^{-6}$}
    &  2,760  && 5.5$e$-2 & -   && 2.1$e$-2 &  -  && 8.9$e$-3 &   -\\
    & 22,080  && 2.0$e$-2 & 1.4 && 3.6$e$-3 & 2.6 && 7.2$e$-4 & 3.6\\
    &176,640  && 5.1$e$-3 & 2.0 && 3.8$e$-4 & 3.2 && 4.5$e$-5 & 4.0\\
    &1,413,120&& 1.0$e$-3 & 2.3 && 4.3$e$-5 & 3.2 && 2.6$e$-6 & 4.1\\
    \bottomrule
  \end{tabular}
\end{table}

\subsubsection*{Performance of BiCGSTAB with AIR as preconditioner}

This section demonstrates the performance of BiCGSTAB with AIR as a
preconditioner in both the advection- and diffusion-dominated
regimes. We will use the number of iterations to convergence as the
indicator of performance as we know that the setup time and cost of
applying AIR per BiCGSTAB iteration are linear with respect to the
matrix size, i.e. $\mathcal{O}(N)$ \cite{Manteuffel:2018,
  Manteuffel:2019}. Hence, the total cost to solve the space-time HDG
problem will be linearly dependent on the number of BiCGSTAB
iterations. In \cref{table:iterations} we list the total number of
BiCGSTAB iterations that are required to reach a relative residual of
$10^{-12}$ in an all-at-once discretization with $T=1$ and using
linear, quadratic, and cubic polynomial approximations to $u$.

\begin{table}[tbp]
  \centering
  \caption{The number of BiCGSTAB iterations (with AIR as the
    preconditioner) required to reach a relative residual of
    $10^{-12}$ for the test case described in
    \cref{ss:gaussianpulse} with $T=1$.
    The stopping tolerance was not reached
    within 5000 iterations if a value is missing.}
  \label{table:iterations}
  \begin{tabular}{@{}crrrrr@{}}
    \multicolumn{6}{c}{$\boldsymbol{p=1}$} \\
    \toprule
    \multirow{2.5}{*}{\begin{tabular}{@{}c@{}}\\DOFs\end{tabular}} & \multicolumn{5}{c}{$\nu$} \\
    \cmidrule{2-6} & $10^{-6}$ & $10^{-4}$ & $10^{-3}$ & $10^{-2}$ & $10^{-1}$ \\
    \midrule
    17,496    & 7 & 7 & 7 & 8 & 12 \\
    136,224   & 8 & 7 & 8 & 10 & 17\\
    1,074,816 & 8 & 8 & 10 & 13 & 54\\
    8,538,624 & 8 & 9 & 12 & 18 & -\\
    \bottomrule
    \multicolumn{6}{c}{ } \\
    \multicolumn{6}{c}{$\boldsymbol{p=2}$} \\
    \toprule
    \multirow{2.5}{*}{\begin{tabular}{@{}c@{}}\\DOFs\end{tabular}} & \multicolumn{5}{c}{$\nu$} \\
    \cmidrule{2-6} & $10^{-6}$ & $10^{-4}$ & $10^{-3}$ & $10^{-2}$ & $10^{-1}$ \\
    \midrule
    34,992     & 11 & 8 & 9 & 11 & 19 \\
    272,448    & 8 & 9 & 10 & 14 & 30 \\
    2,149,632  & 9 & 11 & 13 & 18 & 46\\
    17,077,248 & 9 & 14 & 15 & 30 & 83\\
    \bottomrule
    \multicolumn{6}{c}{ } \\
    \multicolumn{6}{c}{$\boldsymbol{p=3}$} \\
    \toprule
    \multirow{2.5}{*}{\begin{tabular}{@{}c@{}}\\DOFs\end{tabular}} & \multicolumn{5}{c}{$\nu$} \\
    \cmidrule{2-6} & $10^{-6}$ & $10^{-4}$ & $10^{-3}$ & $10^{-2}$ & $10^{-1}$ \\
    \midrule
    58,320     & 9 & 8 & 10 & 14 & 26 \\
    454,080    & 9 & 11 & 12 & 18 & 38 \\
    3,582,720  & 9 & 13 & 15 & 25 & 73\\
    28,462,080 & 10 & 17 & 18 & 46 & 144\\
    \bottomrule
  \end{tabular}
\end{table}

When the problem is close to hyperbolic (when $\nu = 10^{-6}$) we
observe perfect scalability, that is, the number of iterations
required to converge does not change with the problem size. In the
advection-dominated regime, $\nu = 10^{-4}$ and $\nu = 10^{-3}$, the
iteration count increases slightly with problem size, but the increase
is slow, the iteration counts remain quite low. When more significant
diffusion is introduced, $\nu = 10^{-2}$ and $\nu = 10^{-1}$, the
iteration count starts to grow more rapidly with increasing problem
size. These observations hold for all polynomial degrees
considered. It is worth pointing out that for $\nu = 10^{-2}$ and
$\nu = 10^{-1}$, using a classical $P^TAP$ AMG approach rather than
AIR did result in lower iteration counts (not shown), however, the
total time to solution remained notably longer than that of AIR,
likely due to denser coarse-grid matrices.

From the above observations, we may conclude that BiCGSTAB with AIR as
the preconditioner is an excellent iterative solver for the solution
of all-at-once space-time HDG discretizations of the
advection-diffusion problem in the advection-dominated
regime. Unsurprisingly, the solver is suboptimal in the
diffusion-dominated regime. To see why, note that we may write
\cref{eq:stadvdif_a} as
\begin{equation*}
  \widehat{a} \cdot \widehat{\nabla} u - \widehat{\nabla} \cdot (\widehat{\nu} \widehat{\nabla} u)
  = f  \qquad \text{in } \mathcal{E}_h,
\end{equation*}
where $\widehat{\nu} = \text{diag}(0,\nu,\nu)$ (note that there is no
diffusion in the time direction). This is a ``steady''
advection-diffusion problem in $(d+1)$-dimensional space-time with
completely \emph{anisotropic} diffusion in $d$ dimensions and
advection in one dimension. Problems with anisotropic diffusion are
known to pose a challenge to multilevel solvers (see, for example,
\cite{Schroder:2012} for a literature review on the challenges of
using multilevel solvers for problems with anisotropic
diffusion). Robust solvers for the mixed regime of advection and
strongly anisotropic diffusion are ongoing work.

\begin{remark}[Stopping tolerance]
  \label{rem:stopping}
  Our main goal is to test the performance of BiCGSTAB with AIR as a
  preconditioner. For this reason, we chose the stopping criteria for
  BiCGSTAB to be a relative residual of $10^{-12}$. In practice,
  however, the stopping tolerance need not be chosen this small (see,
  for example \cite[pages 73, 77--79]{Elman:Book}). We demonstrate
  this also in \cref{table:l2bicgs} where, for the case $p=2$ for a
  problem with 272,448 degrees of freedom. We note that the error in
  the space-time $L_2$-norm does not improve after the first full
  BiCGSTAB iteration although it takes 8 iterations to reach a
  relative residual of $10^{-12}$ (see \cref{table:iterations}).
\end{remark}

\begin{table}[!htbp]
  \centering
  \caption{Error in the space-time $L_2$-norm as a
    function of BiCGSTAB iteration number for the test case from
    \cref{table:iterations}. We use a quadratic ($p=2$) polynomial
    approximation and the linear system has 272,448 degrees of
    freedom. The preconditioned residual presented in the table is the
    residual of the full-step.}
  \label{table:l2bicgs}
  \begin{tabular}{@{}ccc@{}}
    \toprule
    Iteration & Preconditioned & \multirow{2}{*}{$\dnorm{u-u_h}_{L_2(\mathcal{E}_h)}$} \\
    Number & Residual & \\
    \midrule
    0 & 2.5e-4 & 1.6e-3\\
    1 & 8.4e-6 & 3.6e-3\\
    2 & 1.0e-6 & 3.6e-3\\
    3 & 6.1e-9 & 3.6e-3\\
    4 & 1.6e-10 & 3.6e-3\\
    \bottomrule
  \end{tabular}
\end{table}

\begin{remark}[GMRES and other Krylov]
  In this section we considered the performance of BiCGSTAB with AIR
  as a preconditioner. Of course, we may replace BiCGSTAB with any
  other iterative method for non-symmetric systems of linear
  equations. GMRES performed equally well in most cases; however,
  there were several examples that stalled significantly upon GMRES
  restart, and which also required a moderately high number of
  iterations to convergence, limiting the use of full-memory GMRES.
  In our tests, BiCGSTAB has appeared to be slightly more robust and,
  thus, is used for all numerical tests presented here.
\end{remark}

\subsubsection*{Scalability and parallel-in-time on moving domains}

The current paradigm in scientific computing is to use multiple
computing units simultaneously to lower runtime. Hence, the
scalability of an algorithm is an important measure of
performance. Ideally, the runtime should be inversely proportional to
the number of computing units. Unfortunately, this is not always
achievable due to limited fast access memory (caches), limited memory
bandwidth, and inter-process and inter-node communication. One
advantage of the all-at-once space-time approach over the slab-by-slab
space-time approach is the better communication to computation
ratio. This is because it is possible to parallelize both in space and
time simultaneously, as opposed to the standard parallel-in-time
approach of treating space and time separately.

To test the scalability of BiCGSTAB with AIR as a preconditioner
applied to the space-time HDG discretization, we will measure the
total wall-clock time spent on solving the rotating Gaussian pulse
problem discussed at the beginning of this section. For this, we
consider a final time of $T=16$, we consider both an advection-
($\nu = 10^{-6}$) and a diffusion-dominated ($\nu=10^{-2}$) problem,
and we consider both an all-at-once and a slab-by-slab
discretization. For the all-at-once discretization, we consider two
unstructured space-time meshes; the coarse mesh consists of 45576
tetrahedra and the fine mesh consists of 364608 tetrahedra. For the
slab-by-slab approach, we consider a coarse mesh in which the
space-time domain is divided into 128 space-time slabs and each slab
consists of 384 tetrahedra. The fine slab-by-slab mesh consists of 256
slabs and each slab consists of 1536 tetrahedra. Note that the
slab-by-slab meshes were created to have a similar number of
tetrahedra as the all-at-once space-time meshes.

The total wall-clock times we measure are the combination of time
spent on the following four stages: setup, assembly, solving, and
reconstruction. During the setup stage, the mesh is read from a file
and refined sequentially and finite element spaces and linear and
bi-linear forms are created. We remark that this stage is not
parallelizable and it affects the speedup we obtain. The assembly
stage contains the computation of elemental matrices, computation of
elemental Schur complements, and the assembly of the global linear
system \cref{eq:systemLambda}. This stage is almost embarrassingly
parallel. The next stage is the solve stage in which the global linear
system is solved using BiCGSTAB with AIR as the preconditioner. This
stage is weakly scalable. Finally, the element solution
$U = A^{-1}(F-B\Lambda)$ is reconstructed in the reconstruction stage
(see \cref{ss:discretization}). This step, in theory, does not require
any communication as it can be done completely locally.

Parallel speedup in a strong-scaling sense for each combination of
mesh resolution and diffusion coefficient is shown in
\cref{fig:scalability}.  We see that, in all cases, the all-at-once
approach is the best algorithm sequentially. Hence, the speedups are
calculated relative to the sequential timing of the solutions using
all-at-once approach for different order of approximations.  The best
speedup we achieve at 256 processes is slightly more that 100, and
just less than 50\% efficiency. This can be mostly attributed to the
sequential nature of the setup stage, for example, it takes up to 10\%
of wall-clock time spent for large problems solved with many (64-256)
cores. In addition to this, there is a significant loss of scalability
during the solve stage, which is largely due to the algorithm becoming
communication bound and thereby less efficient in parallel. For
example, the speedup observed on the fine mesh at 256 processes is
close to $2\times$ larger than that observed on the coarse
mesh. Hence, for larger problems, we expect better speedup with the
primary bottleneck being the setup stage. It is worth pointing out
that a number of recent works have developed architecture-aware
communication algorithms for sparse matrix-vector operations and AMG
that can significantly improve scalability in the communication-bound
regime (e.g., \cite{bienz2019node,bienz2020reducing}), but we do not
exploit such methods here.

\cref{fig:relspeedup} plots the relative speedup of the
all-at-once approach to the slab approach with respect to
wall-clock time, that is,
$\text{Time}_{\text{all-at-once}}(\text{n})/$$\text{Time}_{\text{slab-by-slab}}(\text{n})$.
We see that, generally, the all-at-once approach is 20\% to 50\%
faster than the slab-by-slab approach, although in some cases it is up
to $2\times$ faster.  Note that this comparison is imperfect, and an
accurate measure of speedup is nuanced -- for example, the slab mesh
here has roughly 8\% more elements than the all-at-once mesh; however,
the slab mesh is also structured in time, while the all-at-once mesh
is fully unstructured in space and time, which can degrade performance
of multigrid solvers on a fine-grained/memory-access level.  They also
differ algorithmically; for example, the all-at-once approach does one
setup phase for AIR, followed by the solve phase, while using the
slab-by-slab approach requires rebuilding the solver each time
step. In general, we do not try to isolate where the speedup comes
from in this paper. Rather, we highlight here that by using AIR as a
full space-time solver, we are able to see speedups over sequential
time stepping for low core counts, a property that is not shared
by most parallel-in-time schemes.

\begin{figure}
  \centering
  \includegraphics[width=0.45\linewidth]{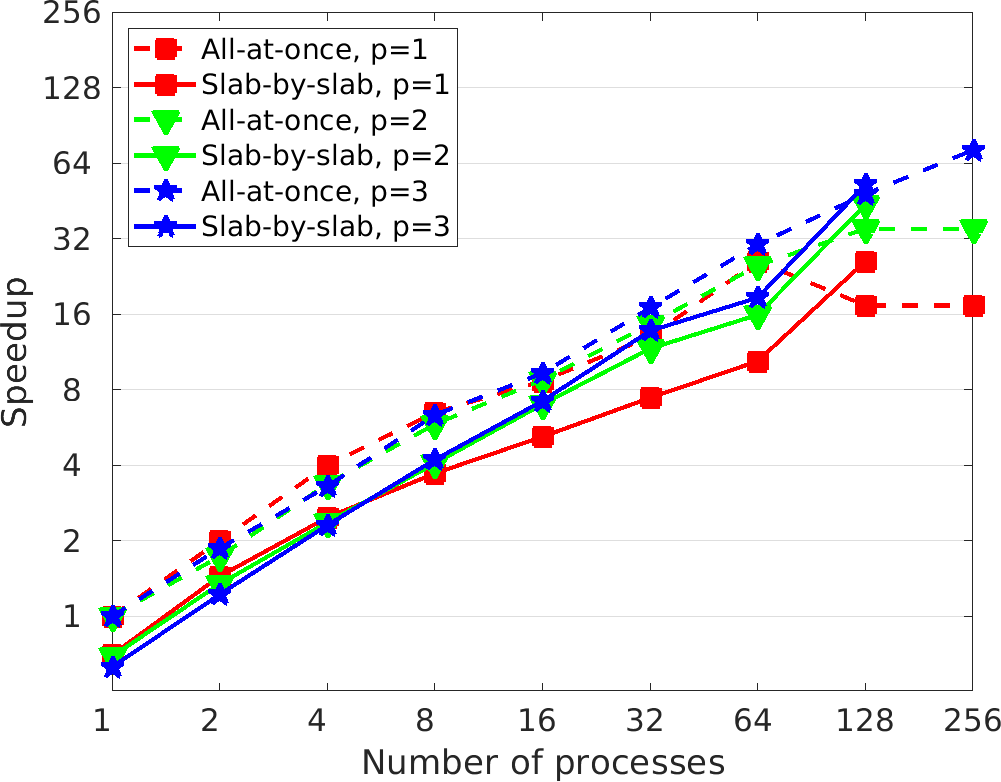}
  \includegraphics[width=0.45\linewidth]{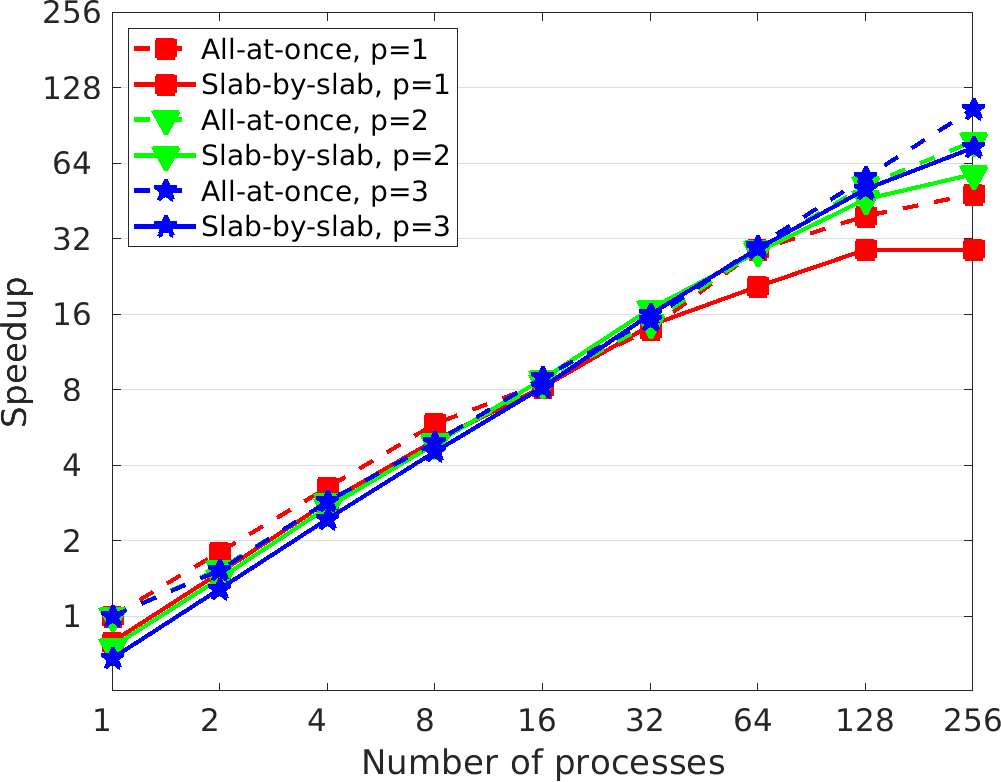}
  \\
  \includegraphics[width=0.45\linewidth]{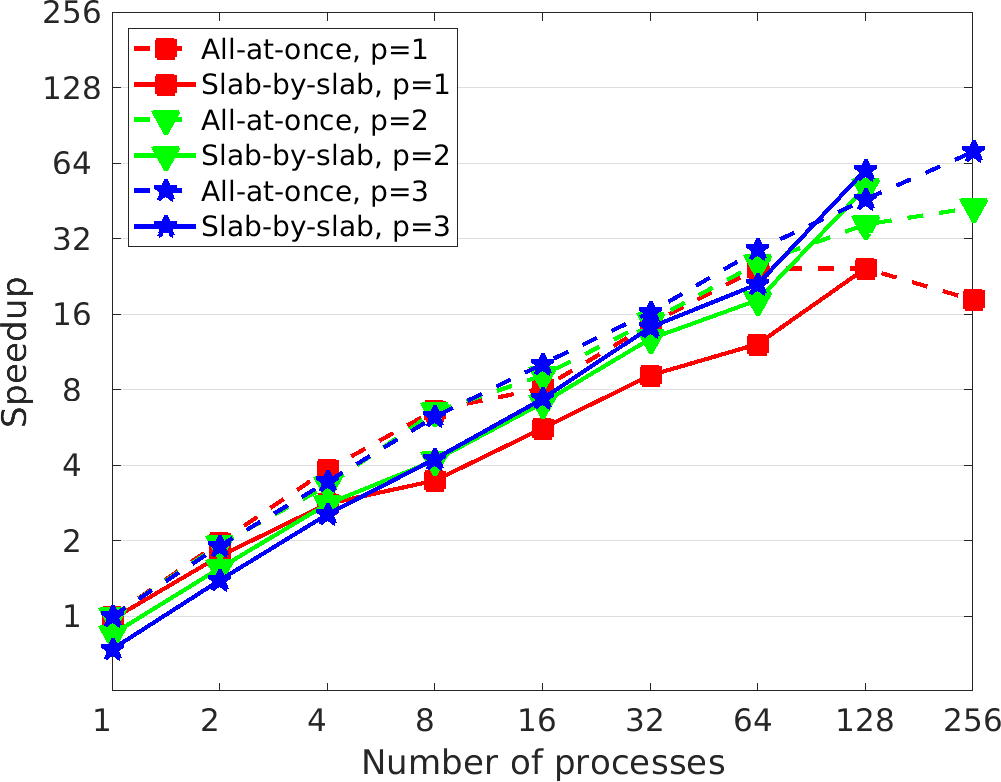}
  \includegraphics[width=0.45\linewidth]{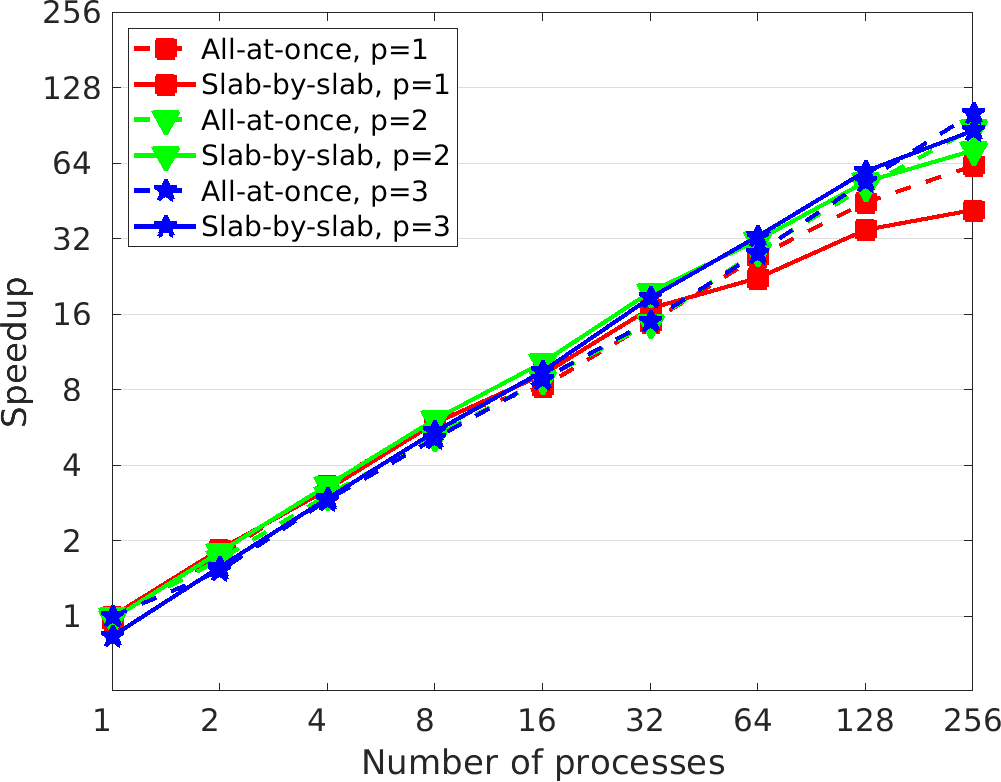}
  \caption{Parallel scalability. Top left: $\nu=10^{-6}$, coarse mesh,
    top right: $\nu=10^{-6}$, fine mesh, bottom left: $\nu=10^{-2}$,
    coarse mesh, bottom right: $\nu=10^{-2}$, fine mesh}
  \label{fig:scalability}
\end{figure}

\begin{figure}
  \centering
  \includegraphics[width=0.45\linewidth]{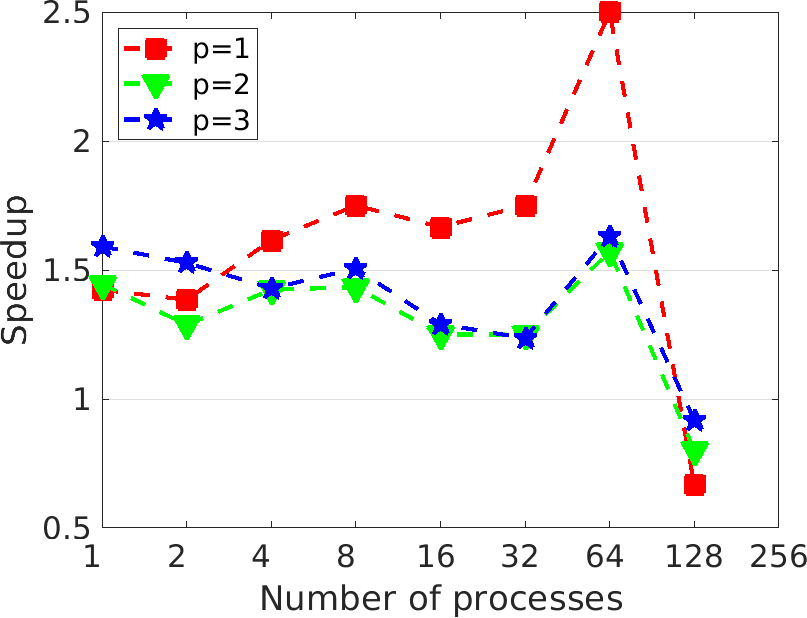}
  \includegraphics[width=0.45\linewidth]{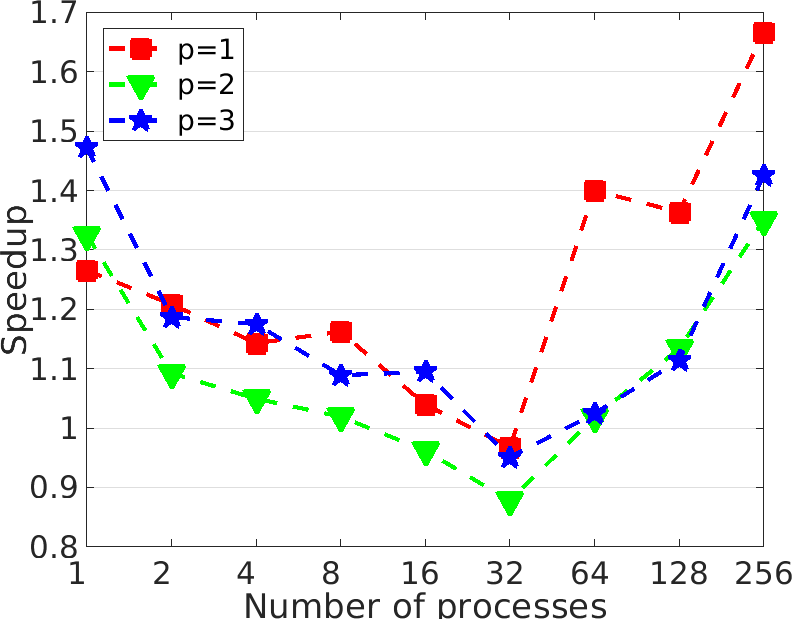}
  \\
  \includegraphics[width=0.45\linewidth]{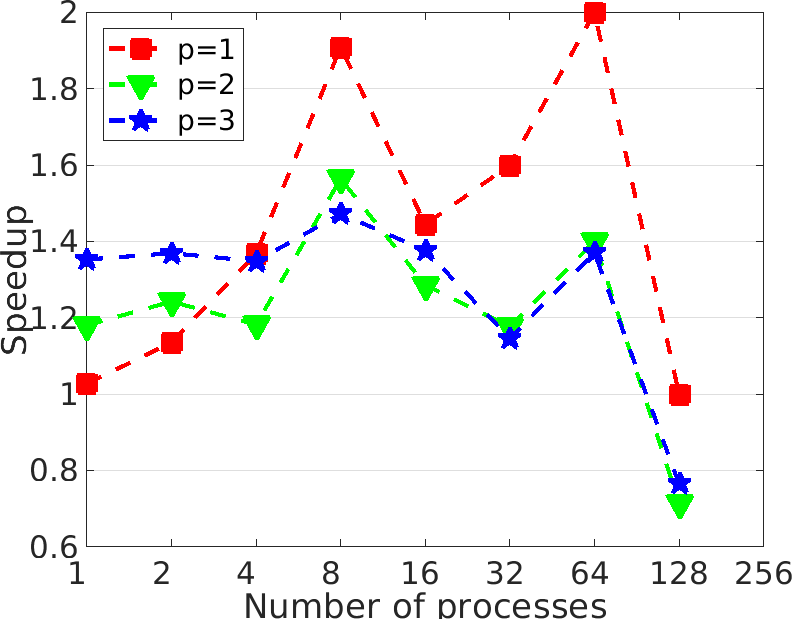}
  \includegraphics[width=0.45\linewidth]{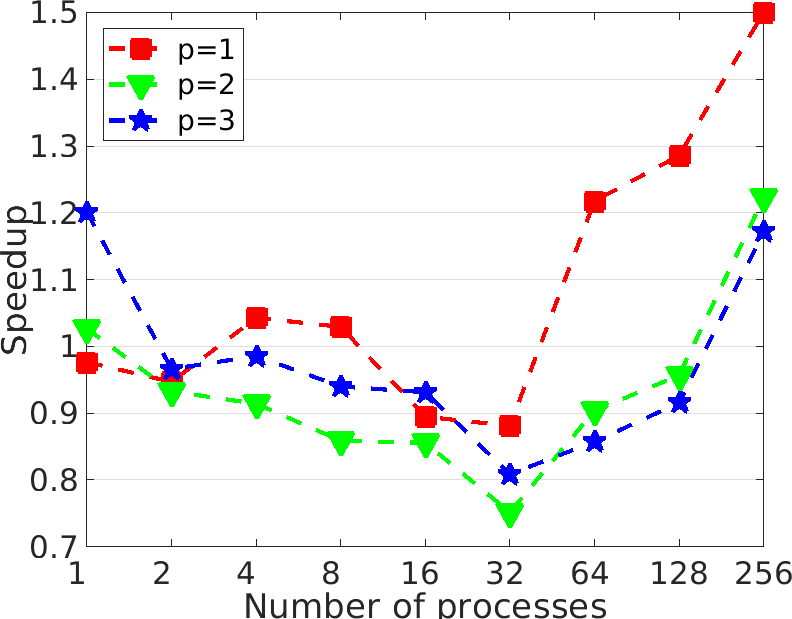}
  \caption{Relative speedup of all-at-once approach against
    slab-by-slab approach. Top left: $\nu=10^{-6}$, coarse mesh, top
    right: $\nu=10^{-6}$, fine mesh, bottom left: $\nu=10^{-2}$,
    coarse mesh, bottom right: $\nu=10^{-2}$, fine mesh}
  \label{fig:relspeedup}
\end{figure}

\subsection{Moving internal layer problem}
\label{ss:efficiencyZZ}

We now consider the moving internal layer problem proposed in
\cite{Frutos:2014}. We solve \cref{eq:stadvdif} on the unit cube
space-time domain ($\mathcal{E}_h = [0,1]^3$) with $a=(x_2, -x_1)^T$,
$f=0$, and with $\nu = 0$ (the hyperbolic limit). We impose a Neumann
boundary at $t=0$, on which we set $g_N = 0$, and an outflow boundary
at the final time $t=1$. On the boundary $x_2=0$ we set $g_D = 1$ and
we set $g_D=0$ on the remaining boundaries. For the time interval of
interest, the exact solution is given by
\begin{equation*}
  u(t,x_1,x_2) =
  \begin{cases}
    1 & \text{when } \norm[0]{(x_1,x_2)}_2<1 \text{ and } \atantwo(x_2,x_1) > \pi/2-t, \\
    0 & \text{otherwise},
  \end{cases}
\end{equation*}
which describes a front that rotates around the origin as time evolves.

\subsubsection*{Space-time adaptive mesh refinement}

To efficiently solve this problem, we use space-time adaptive mesh
refinement (AMR) in an all-at-once discretization, where we refine
locally in both space and time. The Zienkiewicz--Zhu (ZZ) error
estimator \cite{ZZ:book, Zienkiewicz:1987, Zienkiewicz:1992} is used
to mark space-time elements that need to be refined. Although the ZZ
error estimator is not theoretically efficient or reliable for many
problems, it is often used heuristically in adaptive finite element
codes due to its simplicity, low computational cost, and wide
availability. See \cref{fig:interior} for a plot of the mesh and the
solution at two different time slices. A plot of the adaptively
refined mesh in space-time is given in \cref{fig:amr-mesh-plots}.
\begin{figure}
  \centering
  \subfloat[Mesh slice at $t=0.5$]{
  \includegraphics[width=0.46\linewidth]{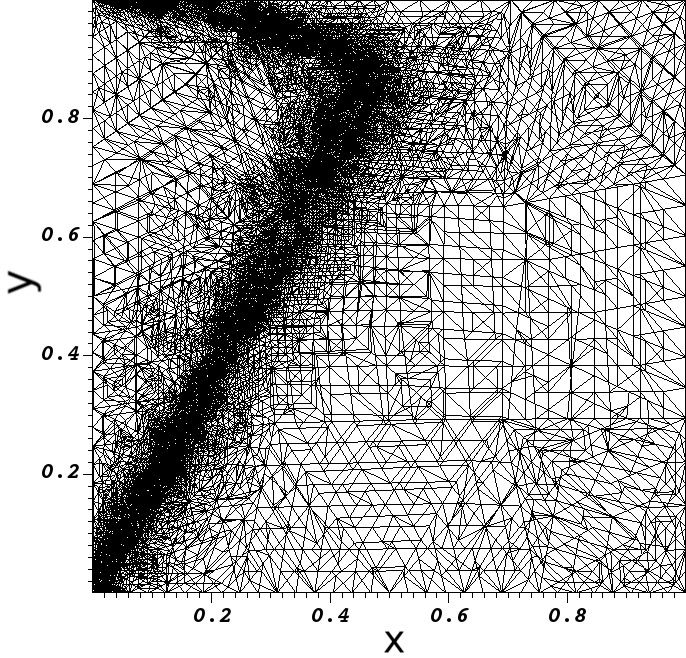}
  }
  \hfill
  \subfloat[Solution slice at $t=0.5$]{
  \includegraphics[width=0.5\linewidth]{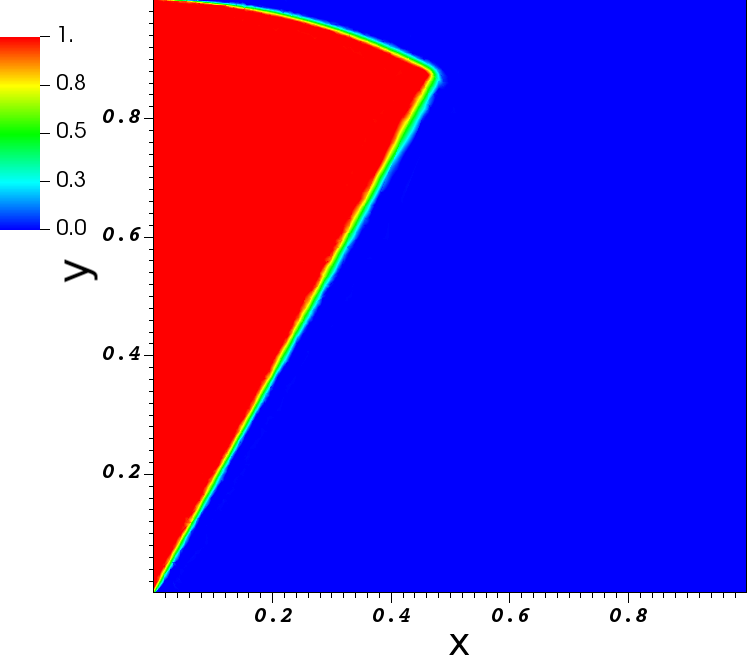}
  }
  \\
  \subfloat[Mesh slice at $t=1$]{
  \includegraphics[width=0.46\linewidth]{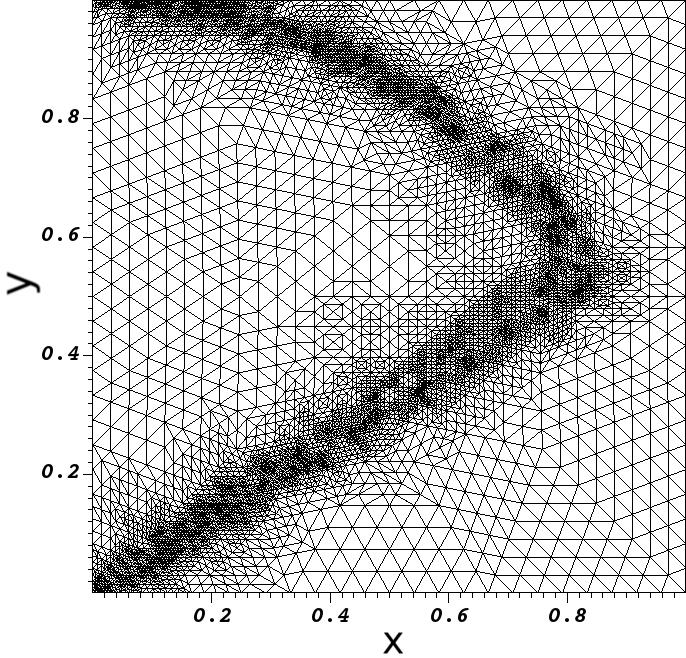}
  }
  \hfill
  \subfloat[Solution slice at $t=1$]{
  \includegraphics[width=0.5\linewidth]{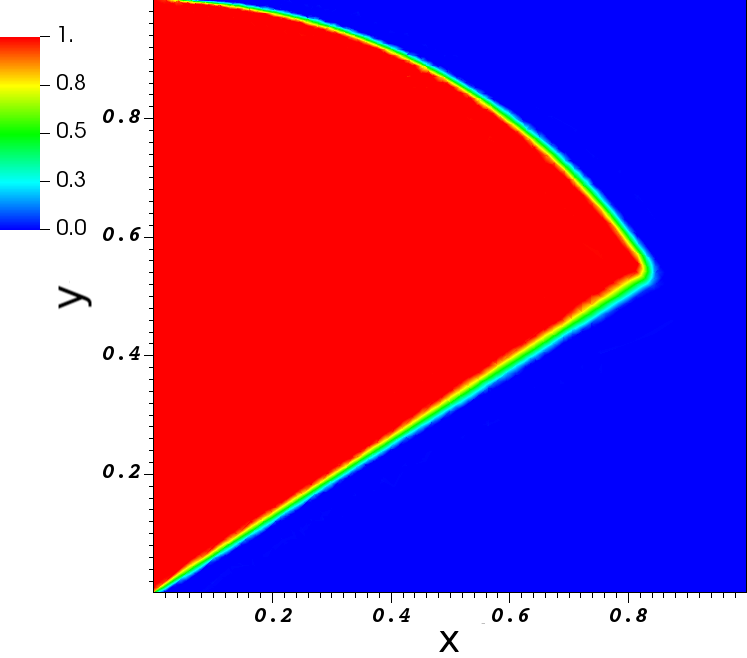}
  }
  \caption{The numerical solution to the interior layer problem at two
    different time slices. The non-triangular polygons in the top left
    figure are because we are slicing the space-time mesh at $t=0.5$;
    we are cutting through space-time tetrahedra.}
  \label{fig:interior}
\end{figure}
\begin{figure}
  \centering
  \includegraphics[width=0.46\linewidth]{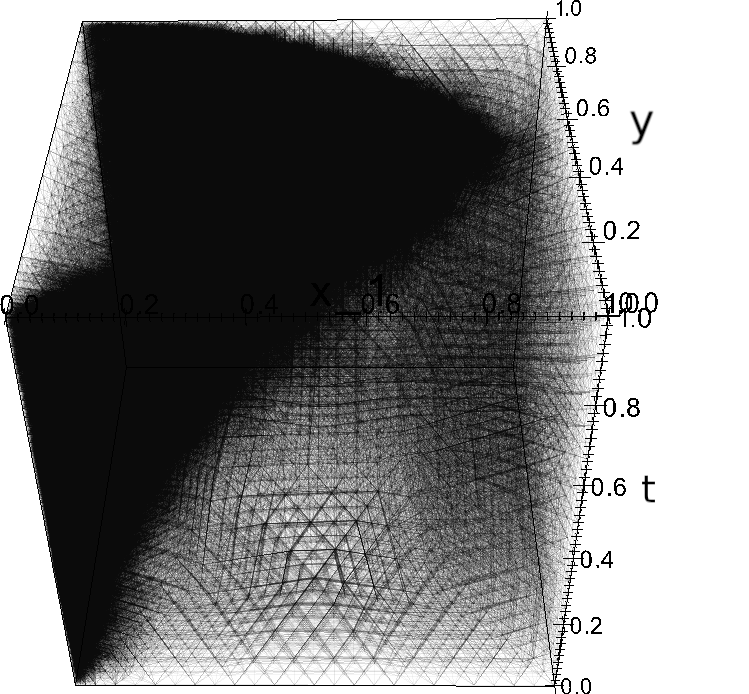}
  \qquad
  \includegraphics[width=0.46\linewidth]{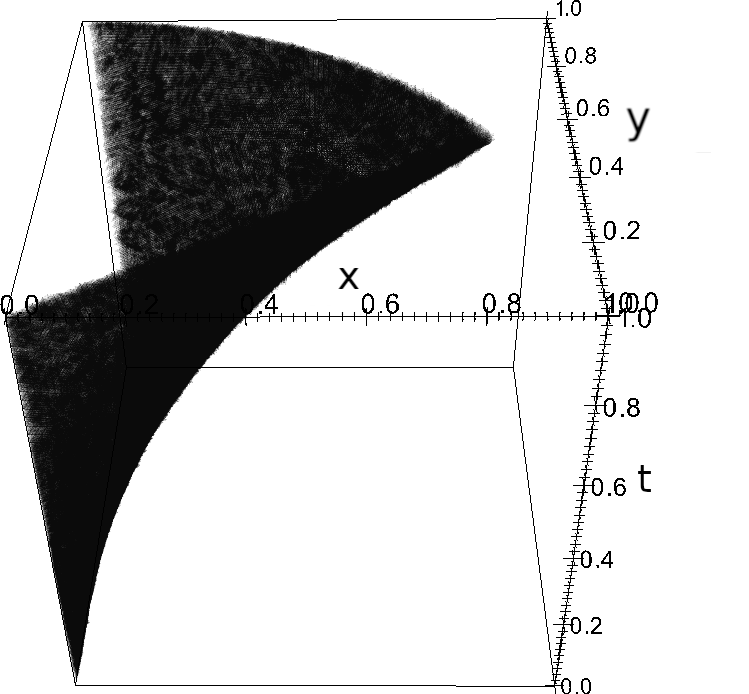}
  \caption{Left: the space-time AMR mesh obtained using the ZZ error
    estimator for the test case described in \cref{ss:efficiencyZZ}.
    Right: only the elements below the median element size are
    shown. Note that the mesh is refined along the space-time interior
    layer.}
  \label{fig:amr-mesh-plots}
\end{figure}

In \cref{fig:adaptivityplot} we compare the convergence of the error
in the space-time $L^2$-norm using space-time AMR to using uniformly
refined meshes.
\begin{figure}
  \centering
  \includegraphics[width=0.65\linewidth]{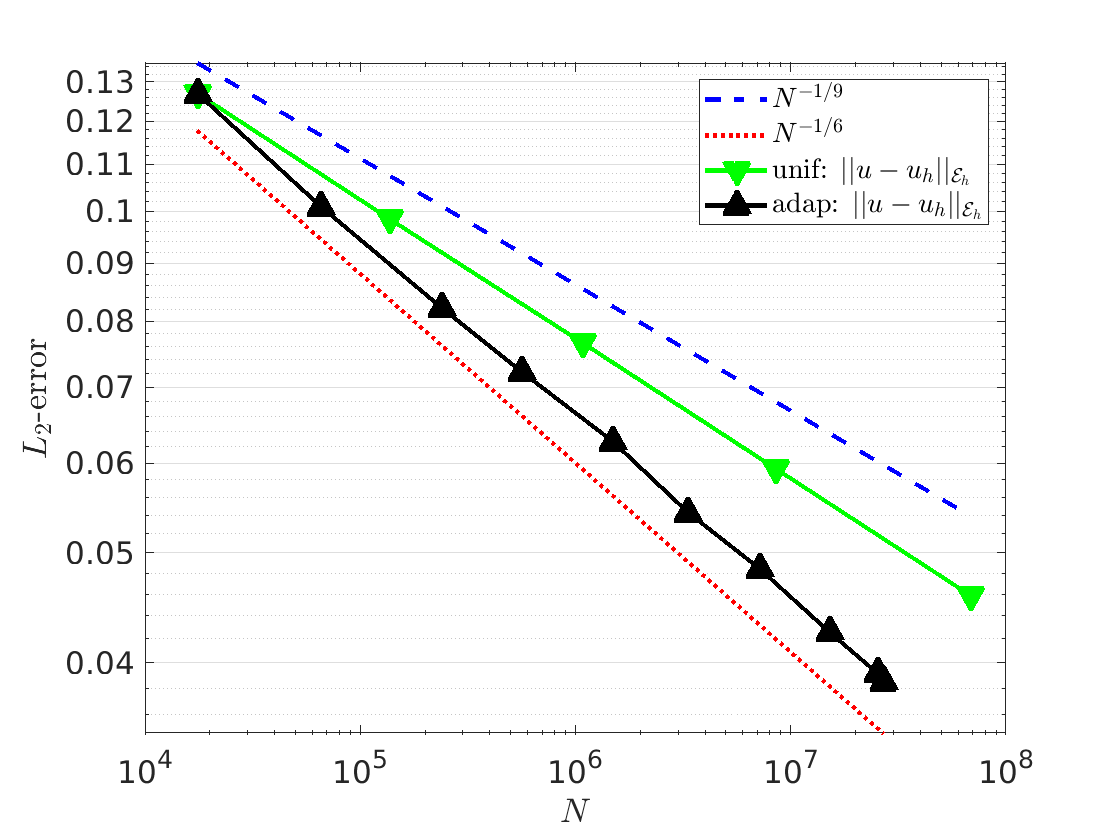}
  \caption{We compare the convergence of the error in the space-time
    $L^2$-norm using space-time AMR to using uniformly refined
    space-time meshes. The test case is described in
    \cref{ss:efficiencyZZ}. Here $N$ is the total number of globally
    coupled DOFs.}
  \label{fig:adaptivityplot}
\end{figure}
Let $N$ be the total number of globally coupled DOFs. We see that the
error on uniform meshes is approximately $\mathcal{O}(N^{-1/9})$ while
the error on the space-time AMR meshes is approximately
$\mathcal{O}(N^{-1/6})$, i.e., we obtain faster convergence using
space-time AMR than when using uniformly refined meshes. We remark
that the error when using an efficient and reliable error estimator
for this problem is expected to be $\mathcal{O}(N^{-1/3})$ (see, for
example, \cite{Burman:2009} for the analysis of an a posteriori error
estimator for a DG discretization of the steady advection equation).
However, we are not aware of any efficient and reliable error
estimators for space-time HDG discretizations of the time-dependent
advection equation.

\subsubsection*{Performance of AIR and on-process solves}

Last, we consider the performance of BiCGSTAB with AIR as a
preconditioner within the context of space-time AMR, and demonstrate
the application of \cref{lem:order}. It is well known that upwind DG
discretizations of advection on convex elements yield matrices that
are block triangular in some element ordering. This can serve as a
robust on-process relaxation routine, where the triangular element
ordering is obtained and an ordered block Gauss--Seidel exactly
inverts the on-process subdomain \cite{hanophy2020parallel}.  AIR
also relies, in some sense, on having a matrix with dominant lower
triangular structure, where it can be shown that triangular structure
allows for a good approximation to ideal restriction
\cite{Manteuffel:2019}. Although HDG discretizations are not always
thought of as block linear systems in the same way that DG
discretizations are, \Cref{lem:order} proves that by treating DOFs on
a given facet as a block in the matrix, an analogous result holds,
that is, the matrix is block lower triangular in some ordering.  With
AIR preconditioning, the block structure can be accounted for by using
a block implementation of AIR (e.g., \cite{Manteuffel:2018}) coupled
with block relaxation or, in the advection-dominated regime, scaling
on the left by the block-diagonal inverse, wherein the scaled matrix
is then scalar lower triangular.

\Cref{fig:relaxationcompare} demonstrates each of these points in
practice, applying AIR to a succession of adaptively refined
space-time problems with various relaxation and block inverse
strategies. The number of DOFs on the x-axis corresponds to successive
levels of adaptive space-time mesh refinement (with correspondingly
larger number of DOFs).  First, note that accounting for the block
structure in the matrix is important for scalable convergence at
larger problem sizes. Not scaling by the block inverse (``No Block
Inv'') can lead to an increase in iteration count by more than
$3\times$ for the largest problem size, and likely worse as DOFs
further increase. On the other hand, after applying the block inverse
scaling, even pointwise Jacobi relaxation yields near perfectly
scalable convergence. Furthermore, because we are considering a
hyperbolic equation with cycle-free space-time velocity field
$\widehat{a}=(1,x_2, -x_1)^T$, from \cref{lem:order} the scaled matrix
is lower triangular. A topological sort of the on-process matrix
yields the triangular ordering, and an ordered Gauss--Seidel
relaxation then exactly inverts the on-process block.  Simulations
in \cref{fig:relaxationcompare} are run on 128 cores, and we see
that with an on-process solve as relaxation, the number of
iterations required to converge is half of the second-best relaxation
method we tested, forward Gauss--Seidel (although both are still quite
good).\footnote{Note that the moving domain considered in
  \cref{ss:gaussianpulse} introduces cycles in the matrix-graph, and
  the resulting matrix is not necessarily block triangular. However,
  cycle-breaking strategies such as used in \cite{haut2019efficient}
  for DG transport simulations on curvilinear meshes can find a
  ``good'' ordering and provide comparable performance as a direct
  on-process solve when coupled with the larger AIR algorithm.}

\begin{remark}[Relation to PinT]
  It is worth pointing out the relation of the on-process solve to
  PinT methods. In MGRiT and Parareal, the relaxation scheme
  corresponds to solving the time-propagation problem between
  F-time-points. If you assign one F-point per process, this is
  solving the time-propagation problem exactly on process and is
  coupled with a coarsening in time.  Similar to the discussion in
  \cref{ss:coarsen} here, we actually solve the space-time problem
  exactly \textit{along the characteristics} on each process, which is
  then coupled with a coarsening that aligns with the characteristics
  (see \cref{fig:CFpoints}). Again, we believe this more holistic
  treatment of space and time is what allows for perfectly scalable
  parallel-in-time convergence on hyperbolic problems.
\end{remark}

\begin{figure}
  \centering
  \includegraphics[width=0.65\linewidth]{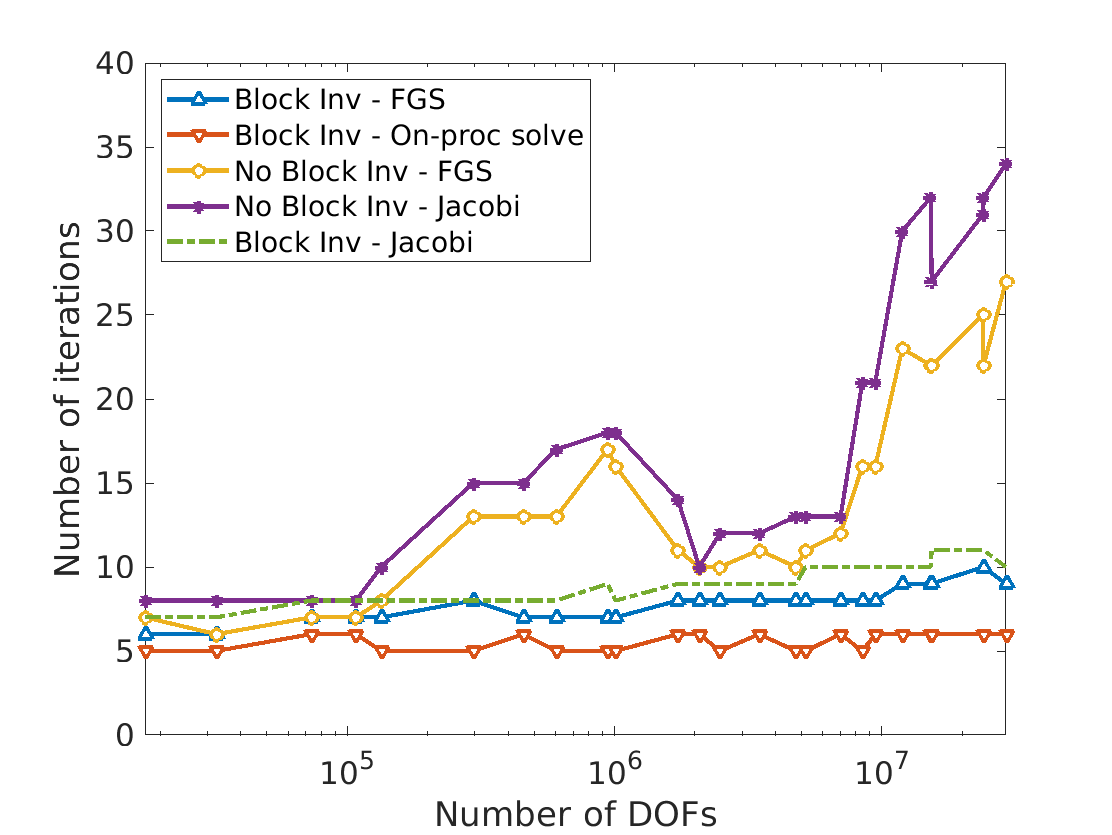}
  \caption{A comparison of the number of BiCGSTAB iterations to
    convergence using AIR as preconditioner with different relaxation
    strategies. We plot the number of iterations against the number of
    globally coupled DOFs at different levels of refinement within the
    AMR algorithm.}
  \label{fig:relaxationcompare}
\end{figure}

\section{Conclusions}
\label{sec:conclusions}

AIR algebraic multigrid is known to be a robust preconditioner for
discretizations of steady advection-dominated advection-diffusion
problems. This paper was motivated by the question whether AIR AMG is
robust as an all-at-once solver for space-time HDG discretizations of
time-dependent advection-diffusion problems, since such problems can
be seen as ``steady'' advection-diffusion problems in
$(d+1)$-dimensions. By numerical examples, we have indeed demonstrated
that AIR provides fast, effective, and scalable preconditioning for
space-time discretizations of advection-dominated problems, including
robust convergence on space-time AMR and moving, time-dependent
domains.

Advection-dominated problems are notoriously difficult for
parallel-in-time methods, motivating a number of efforts to develop
specialized techniques that can handle advection on coarse time-grids
(e.g., \cite{de2019optimizing,dai2013stable}).  Here, we claim that
the best way to provide time parallelism for hyperbolic problems is by
treating space and time \textit{together}. In particular, a critical
component in multigrid methods is constructing an effective coarse
grid. By applying AIR all-at-once to a space-time discretization,
coarsening is able to align with hyperbolic characteristics in
space-time and provide a coarse-grid that naturally captures these
characteristics.  Moreover, we proved that for purely hyperbolic
problems, the space-time HDG discretization on convex elements is
block triangular in some ordering.  Using this ordering, a relaxation
scheme can be designed that exactly solves along the characteristics
on-process, complementing the coarse-grid alignment.  Classical
parallel-in-time multigrid methods that coarsen in space and time
separately are typically unable to align with hyperbolic
characteristics, often resulting in slow convergence or divergence for
time-dependent advection-dominated problems.

\section*{Acknowledgments}

Los Alamos National Laboratory report number LA-UR-20-28396.
This research was enabled in part by the support provided by Sharcnet
(\url{https://www.sharcnet.ca/}) and Compute Canada
(\url{https://www.computecanada.ca}). We are furthermore grateful for
the computing resources provided by the Math Faculty Computing
Facility at the University of Waterloo
(\url{https://uwaterloo.ca/math-faculty-computing-facility/}).

\bibliographystyle{abbrvnat}
\bibliography{references}

\begin{thebibliography}{10}

\bibitem{hypre-library}
{\em hypre: High performance preconditioners}.
\newblock \url{http://www.llnl.gov/casc/hypre}, 2020.

\bibitem{mfem-library}
{\em {MFEM}: Modular finite element methods}.
\newblock \url{mfem.org}, 2020.

\bibitem{Ambati:2007}
{\sc V.~R. Ambati and O.~Bokhove}, {\em Space-time discontinuous {G}alerkin
  discretization of rotating shallow water equations}, J. Comput. Phys., 225
  (2007), pp.~1233--1261, \url{https://doi.org/10.1016/j.jcp.2007.01.036}.

\bibitem{bienz2019node}
{\sc A.~Bienz, W.~D. Gropp, and L.~N. Olson}, {\em Node aware sparse
  matrix--vector multiplication}, J. Parallel Distr. Com., 130 (2019),
  pp.~166--178, \url{https://doi.org/10.1016/j.jpdc.2019.03.016}.

\bibitem{bienz2020reducing}
{\sc A.~Bienz, W.~D. Gropp, and L.~N. Olson}, {\em Reducing communication in
  algebraic multigrid with multi-step node aware communication}, Int. J. High
  Perform. C.,  (2020), p.~1094342020925535,
  \url{https://doi.org/10.1177/1094342020925535}.

\bibitem{Brezina:2010dm}
{\sc M.~Brezina, T.~A. Manteuffel, S.~F. McCormick, J.~W. Ruge, and
  G.~Sanders}, {\em {Towards Adaptive Smoothed Aggregation ($\alpha$SA) for
  Nonsymmetric Problems}}, SIAM J. Sci. Comput., 32 (2010), pp.~14--39,
  \url{https://doi.org/10.1137/080727336}.

\bibitem{Burman:2009}
{\sc E.~Burman}, {\em A posteriori error estimation for interior penalty finite
  element approximations of the advection-reaction equation}, SIAM J. Numer.
  Anal., 47 (2009), pp.~3584--3607, \url{https://doi.org/10.1137/080733899}.

\bibitem{Cockburn:2009}
{\sc B.~Cockburn, J.~Gopalakrishnan, and R.~Lazarov}, {\em Unified
  hybridization of discontinuous {G}alerkin, mixed, and continuous {G}alerkin
  methods for second order elliptic problems}, SIAM J. Numer. Anal., 47 (2009),
  pp.~1319--1365, \url{https://doi.org/10.1137/070706616}.

\bibitem{dai2013stable}
{\sc X.~Dai and Y.~Maday}, {\em Stable parareal in time method for first-and
  second-order hyperbolic systems}, SIAM J. Sci. Comput., 35 (2013),
  pp.~A52--A78, \url{https://doi.org/10.1137/110861002}.

\bibitem{Frutos:2014}
{\sc J.~{de Frutos}, B.~Garc\'ia-Archilla, V.~John, and J.~Novo}, {\em An
  adaptive {SUPG} method for evolutionary convection-diffusion equations},
  Comput. Methods Appl. Mech. Engrg., 273 (2014), pp.~219--237,
  \url{https://doi.org/10.1016/j.cma.2014.01.022}.

\bibitem{de2019optimizing}
{\sc H.~De~Sterck, R.~D. Falgout, S.~Friedhoff, O.~A. Krzysik, and S.~P.
  MacLachlan}, {\em Optimizing {MGRIT} and parareal coarse-grid operators for
  linear advection}, arXiv preprint arXiv:1910.03726,  (2019).

\bibitem{de2020convergence}
{\sc H.~De~Sterck, S.~Friedhoff, A.~J. Howse, and S.~P. MacLachlan}, {\em
  Convergence analysis for parallel-in-time solution of hyperbolic systems},
  Numer. Linear Algebra Appl., 27 (2020), p.~e2271,
  \url{https://doi.org/10.1002/nla.2271}.

\bibitem{Elman:Book}
{\sc H.~C. Elman, D.~J. Silvester, and A.~J. Wathen}, {\em Finite elements and
  fast iterative solvers: with applications in incompressible fluid dynamics},
  Oxford University Press, USA, 2nd~ed., 2014.

\bibitem{Falgout:2014}
{\sc R.~D. Falgout, S.~Friedhoff, T.~V. Kolev, S.~P. Maclachlan, and J.~B.
  Schroder}, {\em Parallel time integration with multigrid}, SIAM J. Sci.
  Comput., 36 (2014), pp.~C635--C661, \url{https://doi.org/10.1137/130944230}.

\bibitem{falgout2017multigrid}
{\sc R.~D. Falgout, S.~Friedhoff, T.~V. Kolev, S.~P. MacLachlan, J.~B.
  Schroder, and S.~Vandewalle}, {\em Multigrid methods with space-time
  concurrency}, Comput. Vis. Sci., 18 (2017), pp.~123--143,
  \url{https://doi.org/10.1007/s00791-017-0283-9}.

\bibitem{friedhoff2020optimal}
{\sc S.~Friedhoff and B.~S. Southworth}, {\em On “optimal” h-independent
  convergence of parareal and multigrid-reduction-in-time using {R}unge-{K}utta
  time integration}, Numer. Linear Algebra Appl.,  (2020), p.~e2301,
  \url{https://doi.org/10.1002/nla.2301}.

\bibitem{gander2016analysis}
{\sc M.~J. Gander and M.~Neumuller}, {\em Analysis of a new space-time parallel
  multigrid algorithm for parabolic problems}, SIAM J. Sci. Comput., 38 (2016),
  pp.~A2173--A2208, \url{https://doi.org/10.1137/15M1046605}.

\bibitem{Gopalakrishnan:2018}
{\sc J.~Gopalakrishnan, M.~Neum\"uller, and P.~S. Vassilevski}, {\em The
  auxiliary space preconditioner for the de {R}ham complex}, SIAM J. Numer.
  Anal., 56 (2018), pp.~3196--3218, \url{https://doi.org/10.1137/17M1153376}.

\bibitem{hanophy2020parallel}
{\sc J.~Hanophy, B.~S. Southworth, R.~Li, T.~Manteuffel, and J.~Morel}, {\em
  Parallel approximate ideal restriction multigrid for solving the {SN}
  transport equations}, Nucl. Sci. Eng.,  (2020), pp.~1--20,
  \url{https://doi.org/10.1080/00295639.2020.1747263}.

\bibitem{haut2019efficient}
{\sc T.~Haut, P.~Maginot, V.~Tomov, B.~Southworth, T.~Brunner, and T.~Bailey},
  {\em An efficient sweep-based solver for the {SN} equations on high-order
  meshes}, Nucl. Sci. Eng., 193 (2019), pp.~746--759,
  \url{https://doi.org/10.1080/00295639.2018.1562778}.

\bibitem{horton1995space}
{\sc G.~Horton and S.~Vandewalle}, {\em A space-time multigrid method for
  parabolic partial differential equations}, SIAM J. Sci. Comput., 16 (1995),
  pp.~848--864, \url{https://doi.org/10.1137/0916050}.

\bibitem{Horvath:2019}
{\sc T.~L. Horv\'{a}th and S.~Rhebergen}, {\em A locally conservative and
  energy-stable finite element method for the {N}avier--{S}tokes problem on
  time-dependent domains}, Int. J. Numer. Meth. Fluids, 89 (2019),
  pp.~519--532, \url{https://doi.org/10.1002/fld.4707}.

\bibitem{Horvath:2020}
{\sc T.~L. Horv\'{a}th and S.~Rhebergen}, {\em An exactly mass conserving
  space-time embedded-hybridized discontinuous {G}alerkin method for the
  {N}avier--{S}tokes equations on moving domains}, J. Comput. Phys., 417
  (2020), \url{https://doi.org/10.1016/j.jcp.2020.109577}.

\bibitem{Hubner:2004}
{\sc B.~H\"ubner, E.~Walhorn, and D.~Dinkler}, {\em A monolithic approach to
  fluid-structure interaction using space-time finite elements}, Comput.
  Methods Appl. Mech. Engrg., 193 (2004), pp.~2087--2104,
  \url{https://doi.org/10.1016/j.cma.2004.01.024}.

\bibitem{Jamet:1978}
{\sc P.~Jamet}, {\em {G}alerkin-type approximations which are discontinuous in
  time for parabolic equations in a variable domain}, SIAM J. Numer. Anal., 15
  (1978), pp.~912--928, \url{https://doi.org/10.1137/0715059}.

\bibitem{Kirk:2019}
{\sc K.~L.~A. Kirk, T.~L. Horvath, A.~Cesmelioglu, and S.~Rhebergen}, {\em
  Analysis of a space-time hybridizable discontinuous {G}alerkin method for the
  advection-diffusion problem on time-dependent domains}, SIAM J. Numer. Anal.,
  57 (2029), pp.~1677--1696, \url{https://doi.org/10.1137/18M1202049}.

\bibitem{Lesoinne:1996}
{\sc M.~Lesoinne and C.~Farhat}, {\em Geometric conservation laws for flow
  problems with moving boundaries and deformable meshes, and their impact on
  aeroelastic computations}, Comput. Methods. Appl. Mech. Engrg., 134 (1996),
  pp.~71--90, \url{https://doi.org/10.1016/0045-7825(96)01028-6}.

\bibitem{Lions:2001}
{\sc J.~L. Lions, Y.~Maday, and G.~Turinici}, {\em R\'esolution d'{EDP} par un
  sch\'ema en temps parar\'eel}, C. R. Acad. Sci. Paris S\'er. I Math, 332
  (2001), pp.~661--668, \url{https://doi.org/10.1016/S0764-4442(00)01793-6}.

\bibitem{makridakis2006posteriori}
{\sc C.~Makridakis and R.~H. Nochetto}, {\em A posteriori error analysis for
  higher order dissipative methods for evolution problems}, Numer. Math.
  (Heidelb), 104 (2006), pp.~489--514,
  \url{https://doi.org/10.1007/s00211-006-0013-6}.

\bibitem{Manteuffel:2019}
{\sc T.~A. Manteuffel, S.~M\"unzenmaier, J.~Ruge, and B.~S. Southworth}, {\em
  Nonsymmetric reduction-based algebraic multigrid}, SIAM J. Sci. Comput., 41
  (2019), pp.~S242--S268, \url{https://doi.org/10.1137/18M1193761}.

\bibitem{Manteuffel:2017}
{\sc T.~A. Manteuffel, L.~N. Olson, J.~B. Schroder, and B.~S. Southworth}, {\em
  A root-node based algebraic multigrid method}, SIAM J. Sci. Comput., 39
  (2017), pp.~S723--S756, \url{https://doi.org/10.1137/16M1082706}.

\bibitem{Manteuffel:2018}
{\sc T.~A. Manteuffel, J.~Ruge, and B.~S. Southworth}, {\em Nonsymmetric
  algebraic multigrid based on local approximate ideal restriction
  ($\ell${AIR})}, SIAM J. Sci. Comput., 40 (2018), pp.~A4105--A4130,
  \url{https://doi.org/10.1137/17M1144350}.

\bibitem{Masud:1997}
{\sc A.~Masud and T.~Hughes}, {\em A space-time {G}alerkin/least-squares finite
  element formulation of the {N}avier--{S}tokes equations for moving domain
  problems}, Comput. Methods Appl. Mech. Engrg., 146 (1997), pp.~91--126,
  \url{https://doi.org/10.1016/S0045-7825(96)01222-4}.

\bibitem{Notay:2012}
{\sc Y.~Notay}, {\em Aggregation-based algebraic multigrid for
  convection-diffusion equations}, SIAM J. Sci. Comput., 34 (2012),
  pp.~A2288--A2316, \url{https://doi.org/10.1137/110835347}.

\bibitem{Persson:2009}
{\sc P.-O. Persson, J.~Bonet, and J.~Peraire}, {\em Discontinuous {G}alerkin
  solution of the {N}avier--{S}tokes equations on deformable domains}, Comput.
  Methods. Appl. Mech. Engrg., 198 (2009), pp.~1585--1595,
  \url{https://doi.org/10.1016/j.cma.2009.01.012}.

\bibitem{Rhebergen:2012}
{\sc S.~Rhebergen and B.~Cockburn}, {\em A space-time hybridizable
  discontinuous {G}alerkin method for incompressible flows on deforming
  domains}, J. Comput. Phys., 231 (2012), pp.~4185--4204,
  \url{https://doi.org/10.1016/j.jcp.2012.02.011}.

\bibitem{Rhebergen:2013}
{\sc S.~Rhebergen and B.~Cockburn}, {\em Space-time hybridizable discontinuous
  {G}alerkin method for the advection-diffusion equation on moving and
  deforming meshes}, in The {C}ourant--{F}riedrichs--{L}ewy ({CFL}) condition,
  80 years after its discovery, C.~A. de~Moura and C.~S. Kubrusly, eds.,
  Birkh{\"{a}}user Science, 2013, pp.~45--63,
  \url{https://doi.org/10.1007/978-0-8176-8394-8_4}.

\bibitem{Rhebergen:2013b}
{\sc S.~Rhebergen, B.~Cockburn, and J.~J.~W. van~der Vegt}, {\em A space-time
  discontinuous {G}alerkin method for the incompressible {N}avier--{S}tokes
  equations}, J. Comput. Phys., 233 (2013), pp.~339--358,
  \url{https://doi.org/10.1016/j.jcp.2012.08.052}.

\bibitem{Riviere:book}
{\sc B.~Rivi\`ere}, {\em Discontinuous {G}alerkin Methods for Solving Elliptic
  and Parabolic Equations}, vol.~35 of Frontiers in Applied Mathematics,
  Society for Industrial and Applied Mathematics, Philadelphia, 2008.

\bibitem{Ruge:1987}
{\sc J.~W. Ruge and K.~St{\"u}ben}, {\em Algebraic multigrid}, in Multigrid
  methods, SIAM, 1987, pp.~73--130,
  \url{https://doi.org/10.1137/1.9781611971057.ch4}.

\bibitem{ruprecht2018wave}
{\sc D.~Ruprecht}, {\em Wave propagation characteristics of parareal}, Comput.
  Visual Sci., 19 (2018), pp.~1--17,
  \url{https://doi.org/10.1007/s00791-018-0296-z}.

\bibitem{Sala:2008cv}
{\sc M.~Sala and R.~S. Tuminaro}, {\em {A New {P}etrov{\textendash}{G}alerkin
  Smoothed Aggregation Preconditioner for Nonsymmetric Linear Systems}}, SIAM
  J. Sci. Comput., 31 (2008), pp.~143--166,
  \url{https://doi.org/10.1137/060659545}.

\bibitem{schotzau2000time}
{\sc D.~Sch{\"o}tzau and C.~Schwab}, {\em Time discretization of parabolic
  problems by the hp-version of the discontinuous {G}alerkin finite element
  method}, SIAM J. Numer. Anal., 38 (2000), pp.~837--875,
  \url{https://doi.org/10.1137/S0036142999352394}.

\bibitem{Schroder:2012}
{\sc J.~B. Schroder}, {\em Smoothed aggregation solvers for anisotropic
  diffusion}, Numer. Linear Algebra Appl., 19 (2012), pp.~296--312,
  \url{https://doi.org/10.1002/nla.1805}.

\bibitem{Sollie:2011}
{\sc W.~E.~H. Sollie, O.~Bokhove, and J.~J.~W. {van der Vegt}}, {\em Space-time
  discontinuous {G}alerkin finite element method for two-fluid flows}, J.
  Comput. Phys., 230 (2011), pp.~789--817,
  \url{https://doi.org/10.1016/j.jcp.2010.10.019}.

\bibitem{Tavelli:2015}
{\sc M.~Tavelli and M.~Dumbser}, {\em A staggered space-time discontinuous
  {G}alerkin method for the incompressible {N}avier--{S}tokes equations on
  two-dimensional triangular meshes}, Comput. Fluids, 119 (2015), pp.~235--249,
  \url{https://doi.org/10.1016/j.compfluid.2015.07.003}.

\bibitem{Tavelli:2016}
{\sc M.~Tavelli and M.~Dumbser}, {\em A staggered space-time discontinuous
  {G}alerkin method for the three-dimensional incompressible {N}avier--{S}tokes
  equations on unstructured tetrahedral meshes}, J. Comput. Phys., 319 (2016),
  pp.~294--323, \url{https://doi.org/10.1016/j.jcp.2016.05.009}.

\bibitem{Tezduyar:1992b}
{\sc T.~E. Tezduyar, M.~Behr, S.~Mittal, and J.~Liou}, {\em A new strategy for
  finite element computations involving moving boundaries and interfaces. {T}he
  deforming-spatial-domain/space-time procedure: {II}. {C}omputation of
  free-surface flows, two-liquid flows, and flow with drifting cylinders},
  Comput. Methods Appl. Mech. Engrg., 94 (1992b), pp.~353--371,
  \url{https://doi.org/10.1016/0045-7825(92)90060-W}.

\bibitem{Tezduyar:2006}
{\sc T.~E. Tezduyar, S.~Sathe, and K.~Stein}, {\em Solution techniques for the
  fully discretized equations in computation of fluid-structure interactions
  with the space-time formulations}, Comput. Methods Appl. Mech. Engrg., 195
  (2006), pp.~5743--5753, \url{https://doi.org/10.1016/j.cma.2005.08.023}.

\bibitem{Sudirham:2008}
{\sc J.~J.~W. van~der Vegt and J.~J. Sudirham}, {\em A space-time discontinuous
  {G}alerkin method for the time-dependent {O}seen equations}, Appl. Numer.
  Math, 58 (2008), pp.~1892--1917,
  \url{https://doi.org/10.1016/j.apnum.2007.11.010}.

\bibitem{Vegt:2002}
{\sc J.~J.~W. van~der Vegt and H.~van~der Ven}, {\em Space-time discontinuous
  {G}alerkin finite element method with dynamic grid motion for inviscid
  compressible flows: {I}. {G}eneral formulation}, J. Comput. Phys., 182
  (2002), pp.~546--585, \url{https://doi.org/10.1006/jcph.2002.7185}.

\bibitem{Ven:2008}
{\sc H.~van~der Ven}, {\em An adaptive multitime multigrid algorithm for
  time-periodic flow simulations}, J. Comput. Phys., 227 (2008),
  pp.~5286--5303, \url{https://doi.org/10.1016/j.jcp.2008.01.039}.

\bibitem{Walhorn:2005}
{\sc E.~Walhorn, A.~K\"olke, B.~H\"ubner, and D.~Dinkler}, {\em Fluid-structure
  coupling within a monolithic model involving free surface flows}, Comput.
  Struct., 83 (2005), pp.~2100--2111,
  \url{https://doi.org/10.1016/j.compstruc.2005.03.010}.

\bibitem{Wang:2015}
{\sc L.~Wang and P.-O. Persson}, {\em A high-order discontinuous {G}alerkin
  method with unstructured space-time meshes for two-dimensional compressible
  flows on domains with large deformations}, Comput. Fluids, 118 (2015),
  pp.~53--68, \url{https://doi.org/10.1016/j.compfluid.2015.05.026}.

\bibitem{Weinzierl:2012}
{\sc T.~Weinzierl and T.~K\"oppl}, {\em A geometric space-time multigrid
  algorithms for the heat equation}, Numer. Math. Theory Methods Appl., 5
  (2012), pp.~110--130, \url{https://doi.org/10.1017/S1004897900000258}.

\bibitem{Wesseling:2001}
{\sc P.~Wesseling and C.~W. Oosterlee}, {\em Geometric multigrid with
  applications to computational fluid dynamics}, J. Comput. Appl. Math., 128
  (2001), pp.~311--334, \url{https://doi.org/10.1016/S0377-0427(00)00517-3}.

\bibitem{Wiesner:2014}
{\sc T.~A. Wiesner, R.~S. Tuminaro, W.~A. Wall, and M.~W. Gee}, {\em Multigrid
  transfers for nonsymmetric systems based on {S}chur complements and
  {G}alerkin projections}, Numer. Linear Algebra Appl., 21 (2014),
  pp.~415--438, \url{https://doi.org/10.1002/nla.1889}.

\bibitem{ZZ:book}
{\sc O.~C. Zienkiewicz, R.~L. Taylor, and J.~Z. Zhu}, {\em The finite element
  method: its basis and fundamentals}, Elsevier, 2013.

\bibitem{Zienkiewicz:1987}
{\sc O.~C. Zienkiewicz and J.~Z. Zhu}, {\em A simple error estimator and
  adaptive procedure for practical engineering analysis}, Int. J. Numer.
  Methods Eng., 24 (1987), pp.~337--357,
  \url{https://doi.org/10.1002/nme.1620240206}.

\bibitem{Zienkiewicz:1992}
{\sc O.~C. Zienkiewicz and J.~Z. Zhu}, {\em The superconvergent patch recovery
  and a posteriori error estimates. part 2: Error estimates and adaptivity},
  Int. J. Numer. Methods Eng., 33 (1992), pp.~1365--1382,
  \url{https://doi.org/10.1002/nme.1620330703}.

\end{thebibliography}
\end{document}